\date{}
\newtheorem{theorem}{Theorem}
\newtheorem{lemma}[theorem]{Lemma}
\newtheorem{cor}[theorem]{Corollary}
\newtheorem{remark}[theorem]{Remark}
\theoremstyle{definition} 
\DeclareMathOperator{\ext}{\mathcal{E}}
\newcommand{\ip}[2]{\langle#1\hspace*{.5mm},#2\rangle}
\newcommand{\dual}[2]{\langle#1\hspace*{.5mm},#2\rangle}
\newcommand{\vdual}[2]{(#1\hspace*{.5mm},#2)}
\newcommand{\abs}[1]{\vert #1 \vert}
\newcommand{\norm}[3][]{#1\|#2#1\|_{#3}}
\newcommand{\wat}{\widehat}
\def\pwnabla{\nabla_\TT}
\def\div{{\rm div\,}}
\def\pwdiv{ {\rm div}_{\TT}\,}
\newcommand{\trace}{\mathrm{tr}_\Gamma}
\newcommand{\hp}{{hp}}
\newcommand{\R}{\ensuremath{\mathbb{R}}}
\newcommand{\HH}{\ensuremath{\mathbf{H}}}
\newcommand{\LL}{\ensuremath{\mathbf{L}}}
\newcommand{\cL}{\ensuremath{\mathcal{L}}}
\newcommand{\nn}{\ensuremath{\mathbf{n}}}
\newcommand{\vv}{\ensuremath{\boldsymbol{v}}}
\newcommand{\ww}{\ensuremath{\boldsymbol{w}}}
\newcommand{\TT}{\ensuremath{\mathcal{T}}}
\newcommand{\cS}{\ensuremath{\mathcal{S}}}
\newcommand{\cR}{\ensuremath{\mathcal{R}}}
\newcommand{\el}{\ensuremath{T}}
\newcommand{\OO}{\ensuremath{\mathcal{O}}}
\newcommand{\talpha}{{\underline{\alpha}}}
\newcommand{\bbeta}{{\boldsymbol\beta}}
\newcommand{\cchi}{{\boldsymbol\chi}}
\newcommand{\ssigma}{{\boldsymbol\sigma}}
\newcommand{\ttau}{{\boldsymbol\tau}}
\newcommand{\qq}{{\boldsymbol{q}}}
\newcommand{\uu}{\boldsymbol{u}}
\newcounter{constantsnumber}
\def\setc#1{
  \ifthenelse{\equal{#1}{poinc}}{C_{\rm edge}}{ 
   \refstepcounter{constantsnumber}
   \label{const#1}C_{\theconstantsnumber}}}
\def\c#1{
  \ifthenelse{\equal{#1}{poinc}}{C_{\rm edge}}{ 
    C_{\ref{const#1}}}}
\title{Combining the DPG method with finite elements
\thanks{Supported by CONICYT through FONDECYT projects 1150056 and 1170672, Anillo ACT1118 (ANANUM),
       and BASAL project CMM, Universidad de Chile, Chile.}}
\author{
Thomas~F\"uhrer$^\dagger$
\and
Norbert Heuer\thanks{
Facultad de Matem\'aticas, Pontificia Universidad Cat\'olica de Chile,
Avenida Vicu\~na Mackenna 4860, Santiago, Chile,
email: {\tt \{tofuhrer,nheuer\}@mat.uc.cl}}
\and
Michael Karkulik\thanks{
Departamento de Matem\'atica, Universidad T\'ecnica Federico Santa Mar\'\i a,
Avenida Espa\~na 1680, Valpara\'\i so, Chile,
email: {\tt michael.karkulik@usm.cl}}
\and
Rodolfo Rodr\'\i guez\thanks{
Departamento de Ingenier\'\i a Matem\'atica and
Centro de Investigaci\'on en Ingenier\'\i a Matem\'atica,
Universidad de Concepci\'on, Casilla 160-C, Concepci\'on, Chile,
email: {\tt rodolfo@ing-mat.udec.cl}}}
\begin{document}
\maketitle
\begin{abstract}
We propose and analyze a discretization scheme that combines the discontinuous Petrov-Galerkin
and finite element methods. The underlying model problem is of general diffusion-advection-reaction
type on bounded domains, with decomposition into two sub-domains. We propose
a heterogeneous variational formulation that is of the ultra-weak (Petrov-Galerkin)
form with broken test space in one part, and of Bubnov-Galerkin form in the other.
A standard discretization with conforming approximation spaces and appropriate test spaces
(optimal test functions for the ultra-weak part and standard test functions for the Bubnov-Galerkin
part) gives rise to a coupled DPG-FEM scheme. We prove its well-posedness and quasi-optimal
convergence. Numerical results confirm expected convergence orders.

\bigskip
\noindent
{\em Key words}:
DPG method with optimal test functions, finite element method, domain decomposition,
coupling, ultra-weak formulation, diffusion-advection-reaction problem

\noindent
{\em AMS Subject Classification}: 65N30, 35J20
\end{abstract}

\section{Introduction}

The discontinuous Petrov-Galerkin method with optimal test functions (DPG method) is an approximation
scheme that makes the use of optimal test functions,
cf.~\cite{BarretM_84_ASP,CausinS_05_DPG,DemkowiczG_10_CDP}, feasible
by considering broken test norms \cite{DemkowiczG_11_CDP}. Optimal test functions are those
which maximize discrete inf-sup numbers, and the broken form of test spaces and norms allows
for their local calculation or approximation. In this form, the DPG method has been
developed by Demkowicz and Gopalakrishnan,
see the just cited references~\cite{DemkowiczG_10_CDP,DemkowiczG_11_CDP}.

The DPG method has been designed having in mind problems where standard methods suffer from
locking phenomena (of small inf-sup numbers) or, otherwise, require specific stabilization techniques.
This is particularly the case with singularly perturbed problems where DPG schemes have
made some contributions
\cite{DemkowiczH_13_RDM,ChanHBTD_14_RDM,BroersenS_14_RPG,BroersenS_15_PGD,HeuerK_RDM}.
Nevertheless, in the current form most of the schemes are not cheap to implement. On the
one hand, corresponding formulations have several unknowns as is the case with
mixed finite elements. On the other hand, the efficient approximation of optimal test functions
for singularly perturbed problems is ongoing research. For these reasons, advanced DPG techniques
are best used for specific problems whereas finite elements are hard to beat when solving
uniformly well-posed problems. Though, it has to be said, that in the latter cases
DPG schemes can also be efficient and are competitive in general,
cf. the software package developed by Roberts \cite{Roberts_14_CSF}.

In this paper we develop a discretization method that combines DPG techniques with standard
finite elements. In this way, one can restrict the use of more expensive DPG approximations
to regions where they are beneficial. Examples are, e.g., reaction-advection-diffusion problems
with small diffusivity in a reduced area, or transmission problems that couple a singularly
perturbed problem with an unperturbed problem. In a previous publication
\cite{FuehrerHK_CDB} we have proposed such a combination with boundary elements
to solve transmission problems of the Laplacian in the full space,
and studied a singularly perturbed case of reaction diffusion in \cite{FuehrerH_RCD}.
In this paper we follow the general framework from \cite{FuehrerHK_CDB}.
There, the basis is set by a heterogeneous variational formulation consisting of an ultra-weak
one in a bounded domain and variational boundary integral equations for the exterior
unbounded part. Here, we combine an ultra-weak formulation with a standard variational form.
We remark that this approach of combining different variational formulations has been
systematically analyzed in \cite{FuentesKDLT_CVF}. Indeed, it is not essential to use
an ultra-weak formulation for the DPG scheme, any well-posed formulation would work.
Though, the overall strategy in \cite{FuentesKDLT_CVF} is to employ
DPG techniques throughout whereas we combine different discretization techniques.

Having set our heterogeneous formulation, we proceed to rewrite it by using the so-called
trial-to-test operator (which maps the test space to the ansatz space). This is only done
for the ultra-weak formulation. The whole system then transforms into one where spaces
on the ansatz and test sides are identical. In this way, our heterogeneous variational
formulation fits the Lax-Milgram framework just as in \cite{FuehrerHK_CDB}.
We prove coercivity under the condition that the trial-to-test operator is weighted by
a sufficiently large constant. Then, quasi-optimal convergence of a discretized version
follows by standard arguments. When proving coercivity we follow steps that are similar
to the ones when studying the coupling of DPG with boundary elements.
But whereas \cite{FuehrerHK_CDB} analyzes only the Laplacian, here we set up the scheme
and prove coercivity for a general second-order equation of reaction-advection-diffusion type.
Throughout we assume that our problem is uniformly well posed, i.e., we do not study
variations for singularly perturbed cases as in \cite{FuehrerH_RCD}.
Also note that, since coefficients are variable, transmission problems can be treated the same way
by selecting the sub-domains accordingly. One only has to move the possibly non-homogeneous
jump data to the right-hand side functional.

The remainder of this paper is as follows. In Section~\ref{sec_math} we start by
formulating the model problem. A heterogeneous variational formulation is given in
\S\ref{sec_var}. There, we also state its well-posedness and coercivity (Theorem~\ref{thm_VF})
and briefly mention a simplified case where continuity across the sub-domain interface is
incorporated strongly (Corollary~\ref{cor_VF}).
The corresponding discrete DPG-FEM scheme is presented in
\S\ref{sec_DPG}. Its quasi-optimal convergence is announced in Theorem~\ref{thm_DPG}.
Most technical details and proofs are given in Section~\ref{sec_tec}.
In the last section we report on some numerical experiments.

Furthermore, throughout the paper, suprema are taken over sets excluding the null element,
and the notation $A\lesssim B$ is used to say that $A\leq C\cdot B$ with a constant $C>0$ which
does not depend on any quantity of interest. Correspondingly, the notation $A\gtrsim B$ is used.

\section{Mathematical setting and main results} \label{sec_math}
Let $\Omega\subset\R^d$, $d\in\{2,3\}$, be a bounded, simply connected polygonal/polyhedral
Lipschitz domain with boundary $\partial\Omega$, and
normal vector $\nn_\Omega$ on $\partial\Omega$ pointing outside of $\Omega$.
We consider the following elliptic problem of diffusion-advection-reaction type.
Given $f\in L_2(\Omega)$ find $u\in H^1_0(\Omega)$ such that
\begin{align} \label{prob}
  Au := \div\Bigl(-\talpha\nabla u + \bbeta u\Bigr) + \gamma u &= f
                   \quad \text{ in } \Omega.
\end{align}
Here, $L_2(\Omega)$ and $H^1_0(\Omega)$ denote standard Sobolev spaces, the latter with
zero trace on $\partial\Omega$.
Furthermore, all coefficients are supposed to be sufficiently regular, with
$\talpha(x)\in\R^{d\times d}$, $\bbeta(x)\in\R^d$, $\gamma(x)\in\R$ for $x\in\bar\Omega$.
We assume that all coefficients are uniformly bounded.
Furthermore, we assume that the symmetric part of $\talpha$ is positive definite and uniformly bounded
from below, with minimum eigenvalue larger than or equal to $\alpha_0>0$,
and that $\frac 12\div\bbeta+\gamma\ge 0$ in $\Omega$.
These conditions imply that the operator $A$ is bounded and coercive on $H^1_0(\Omega)$.

\subsection{Heterogeneous variational formulation}\label{sec_var}

In order to solve \eqref{prob} by a combination of DPG method and finite elements, we
formulate the problem in a heterogeneous way, using different variational forms in different
parts of the domain. For ease of illustration, we restrict ourselves to two Lipschitz sub-domains
$\Omega_1$, $\Omega_2$ (again of polygonal/polyhedral form, each with one connected component)
with boundaries $\partial\Omega_1$, $\partial\Omega_2$, as specified in Figure~\ref{fig_domain}.
There, also a notation for the boundary pieces is introduced. In particular, $\Gamma$ denotes
the interface between the sub-domains. The picture indicates that both sub-domains touch the
boundary of $\Omega$ (where the homogeneous Dirichlet condition is imposed), but this is not
essential. For instance, one sub-domain, $\Omega_2$, can be of annular type so that, in that case,
$\partial\Omega\subset\partial\Omega_2$ and $\Gamma=\partial\Omega_1$.
Other combinations can be analyzed without difficulty, also including Neumann conditions.
Nevertheless, since our analysis centers around proving coercivity of bilinear forms,
we need positivity of  the combined advection-reaction term on a sub-domain that does not touch
the Dirichlet boundary.

\noindent
{\bf Assumption 1.}
\emph{For $i=1,2$ there holds:\\
If $\mathrm{meas}(\Gamma_i)=0$ then there exists $\beta>0$ such that
    $\frac 12\div\bbeta+\gamma\ge \beta$ a.e. in $\Omega_i$.}
\medskip

\begin{figure}[htb] \label{fig_domain}
\begin{picture}(270,130)(0,0)
\thinlines
\put(90,0){\circle*{3}} \put(155,130){\circle*{3}}
\thicklines
\put(25,0){\framebox(220,130){}}
\thinlines
\put(90,0){\line(1,2){65}}
\put(65,80){$\Omega_1$} \put(55,65){(DPG)}
\put(180,45){$\Omega_2$} \put(170,30){(FEM)}
\put(140,80){$\Gamma$} \put(0,50){$\Gamma_1$} \put(255,80){$\Gamma_2$}
\end{picture}
\begin{minipage}[b]{0.35\textwidth}
\begin{align*}
   &\bar\Omega = \bar\Omega_1\cup\bar\Omega_2,\quad \Omega_1\cap\Omega_2 = \emptyset,\\
   &\bar\Omega_1\cap\bar\Omega_2=\Gamma,\\
   &\bar\Omega_1\cap\partial\Omega = \bar\Gamma_1,\quad \bar\Omega_2\cap\partial\Omega = \bar\Gamma_2.
\end{align*}
\end{minipage}
\caption{Decomposition of the domain $\Omega$ into sub-domains.}
\end{figure}
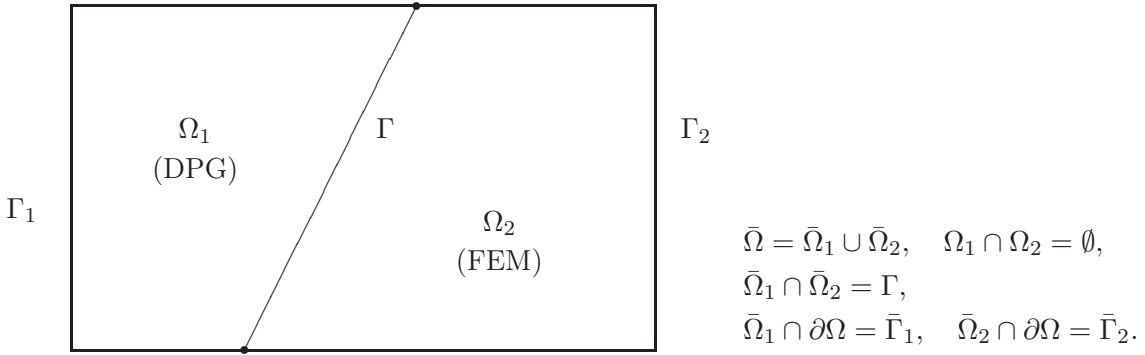

{\bf Standard and broken Sobolev spaces.}
Essential for the DPG method is to use broken test spaces. Therefore, at this early stage
we consider a partitioning $\TT_1$ of $\Omega_1$ into (regular non-intersecting) finite elements
$\el$ such that $\bar\Omega_1=\cup\{\bar\el;\; \el\in\TT_1\}$,
and with skeleton $\cS:=\{\partial\el;\;\el\in\TT_1\}$.

Before describing the variational formulation we introduce the Sobolev spaces we need.
For a domain $\omega\subset\Omega$ we use the standard spaces $L_2(\omega)$, $H^1(\omega)$,
$H^1_0(\omega)$, and $\HH(\mathrm{div},\omega)$.
The trace operator acting on $H^1(\omega)$ will be denoted simply by $(\cdot)|_{\partial\omega}$.
Then we define the trace space $H^{1/2}(\partial\omega) := H^1(\omega)|_{\partial\omega}$
and its dual space
$H^{-1/2}(\partial\omega) := \bigl(H^{1/2}(\partial\omega)\bigr)'$
with canonical norms.
The duality pairing on $\partial\omega$ is $\dual{\cdot}{\cdot}_{\partial\omega}$
and extends the $L_2(\partial\omega)$ bilinear form. Correspondingly,
$\vdual{\cdot}{\cdot}_\omega$ is the $L_2(\omega)$ bilinear form.

We also need $H^1_D(\Omega_i)$ consisting of $H^1$-functions with
vanishing trace on $\Gamma_i$ ($i=1,2$).
Vector-valued spaces and functions will be denoted by bold symbols.
Connected with $\TT_1$ we use the product spaces $H^1(\TT_1)$ and $\HH(\mathrm{div},\TT_1)$ 
with corresponding product norms.

Now, related with $\TT_1$ are the skeleton trace spaces
\begin{align*}
  H^{1/2}(\cS) &:=
  \Big\{ \wat u \in \Pi_{\el\in\TT_1}H^{1/2}(\partial\el);\;
         \exists w\in H^1(\Omega) \text{ such that } 
         \wat u|_{\partial\el} = w|_{\partial\el}\; \forall \el\in\TT_1 \Big\},\\
  H^{-1/2}(\cS) &:=
  \Big\{ \wat\sigma \in \Pi_{\el\in\TT_1}H^{-1/2}(\partial\el);\;
         \exists \qq\in\HH(\mathrm{div},\Omega) \text{ such that } 
         \wat\sigma|_{\partial\el} = (\qq\cdot\nn_{\el})|_{\partial\el}\; \forall\el\in\TT_1 \Big\}
\end{align*}
and
\begin{align*}
  H^{1/2}_{00}(\cS) &:=
  \bigl\{ \wat u\in H^{1/2}(\cS);\; \wat u|_{\partial\Omega_1} = 0 \bigr\},\\
  H^{1/2}_D(\cS) &:=
  \bigl\{ \wat u\in H^{1/2}(\cS);\; \wat u|_{\Gamma_1} = 0 \bigr\}.
\end{align*}
Here, $\nn_\el$ is the exterior unit normal vector on $\partial\el$, and
$(\qq\cdot\nn_{\el})|_{\partial\el}$ indicates the standard way of defining normal traces
of $\HH(\mathrm{div},\el)$-functions.
The notation $\wat u|_{\partial\Omega_1} = 0$ (resp. $\wat u|_{\Gamma_1} = 0$)
is to be understood in the sense that $\wat u$ is a $\TT_1$-piecewise trace of an element of
$H^1_0(\Omega_1)$ (resp. of $H^1_D(\Omega_1)$).
These trace spaces are equipped with the norms
\begin{subequations} \label{Hpm}
\begin{align}
  \norm{\wat u}{H^{1/2}(\cS)} &:=
  \inf \left\{ \norm{w}{H^1(\Omega)};\; w\in H^1(\Omega) \text{ such that }
               \wat u|_{\partial\el}=w|_{\partial\el}\; \forall\el\in\TT_1 \right\},\\
  \norm{\wat\sigma}{H^{-1/2}(\cS)} &:=
  \inf \left\{ \norm{\qq}{\HH(\mathrm{div},\Omega)};\; \qq\in\HH(\mathrm{div},\Omega) \text{ such that }
               \wat\sigma|_{\partial\el}=(\qq\cdot\nn_{\el})|_{\partial\el}\; \forall\el\in\TT_1 \right\},
\end{align}
\end{subequations}
and analogously for $H^{1/2}_{00}(\cS)$ and $H^{1/2}_D(\cS)$.
For functions $\wat u\in H^{1/2}(\cS)$, $\wat\sigma\in H^{-1/2}(\cS)$ (they are elements of product spaces)
and $\ttau\in\HH(\mathrm{div},\TT_1)$, $v\in H^1(\TT_1)$ we use the duality pairings
\begin{align*}
  \dual{\wat u}{\ttau\cdot\nn}_\cS
  := \sum_{\el\in\TT_1}\dual{\wat u|_{\partial\el}}{\ttau\cdot\nn_\el}_{\partial\el},\quad
  \dual{\wat\sigma}{v}_\cS
  := \sum_{\el\in\TT_1}\dual{\wat\sigma|_{\partial\el}}{v}_{\partial\el}.
\end{align*}

{\bf Heterogeneous formulation in $\Omega_1\cup\Omega_2$.}
In $\Omega_1$, where the DPG method will be used, we consider an ultra-weak variational formulation.
As mentioned before, this is just for illustration as any other formulation of
primal, mixed, dual-mixed or strong type can be used and analyzed analogously to our case,
cf.~\cite[Section 2.3]{FuentesKDLT_CVF}.

The ultra-weak formulation requires additional independent unknowns
\begin{align} \label{add_var}
   \ssigma    :=\talpha\nabla u - \bbeta u\ \text{on}\ \Omega_1,\quad
   \wat u     := \Pi_{\el\in\TT_1} u|_{\partial\el},\quad
   \wat\sigma := \Pi_{\el\in\TT_1} (\ssigma\cdot\nn_\el)|_{\partial\el}.
\end{align}
Then we test the defining relation of $\ssigma$
with $\talpha^{-T}$ and $\ttau\in\HH(\mathrm{div},\TT_1)$, and equation \eqref{prob} with $v\in H^1(\TT_1)$.
Integrating by parts element-wise, and substituting
the corresponding terms by $\ssigma$, $\wat u$, and $\wat\sigma$, we obtain
\begin{align} \label{form1}
     \vdual{u}{\pwdiv\ttau + \bbeta\talpha^{-T}\ttau + \gamma v}_{\Omega_1}
   + \vdual{\ssigma}{\pwnabla v+\talpha^{-T}\ttau}_{\Omega_1}
   - \dual{\wat u}{\ttau\cdot\nn}_\cS - \dual{\wat\sigma}{v}_\cS
   = \vdual{f}{v}_{\Omega_1}.
\end{align}
Here, $\pwdiv$ and $\pwnabla$ denote the $\TT_1$-piecewise divergence and gradient operators,
respectively.

In $\Omega_2$ we use the standard primal formulation
\begin{align} \label{form2}
     \vdual{\talpha\nabla u - \bbeta u}{\nabla w}_{\Omega_2}
   + \vdual{\gamma u}{w}_{\Omega_2}
   - \dual{\nn_{\Omega_2}\cdot(\talpha\nabla u-\bbeta u)}{w}_{\partial\Omega_2}
   =
   \vdual{f}{w}_{\Omega_2}
\end{align}
for $w\in H^1_D(\Omega_2)$.

Solving \eqref{prob} in $\Omega$ is equivalent to solving (in appropriate spaces)
\eqref{form1} and \eqref{form2} with homogeneous Dirichlet condition on $\partial\Omega$
and transmission conditions on $\Gamma$.
These transmission conditions will be imposed in variational form. For the time being, we replace
$\nn_{\Omega_2}\cdot(\talpha\nabla u-\bbeta u)|_\Gamma$ by $-\wat\sigma|_\Gamma$ in \eqref{form2}.
Here, we slightly abuse the notation of $\wat\sigma$ noting that
$\dual{\wat\sigma}{v}_\cS = \dual{\wat\sigma}{v}_\Gamma$ for $v\in H^1(\Omega_1)$ with $v|_{\Gamma_1}=0$,
cf., e.g., \cite[Section 2.2]{FuehrerH_RCD}.

We formally distinguish between $u_1:=u|_{\Omega_1}$ and $u_2:=u|_{\Omega_2}$. Then, our
preliminary heterogeneous variational formulation consists in finding
\begin{align*}
   &(\uu,u_2)=(u_1,\ssigma,\wat u,\wat\sigma,u_2)\in U := U_1\times H^1_D(\Omega_2)\\
   &\text{with}\quad
   U_1 := L_2(\Omega_1)\times \LL_2(\Omega_1) \times H^{1/2}_D(\cS) \times H^{-1/2}(\cS)
\end{align*}
such that
\(
   \wat u|_\Gamma = u_2|_\Gamma
\)
and
\begin{align*}
     \vdual{u_1}{\pwdiv\ttau + \bbeta\talpha^{-T}\ttau + \gamma v}_{\Omega_1}
   + \vdual{\ssigma}{\pwnabla v+\talpha^{-T}\ttau}_{\Omega_1}
   - \dual{\wat u}{\ttau\cdot\nn}_\cS - \dual{\wat\sigma}{v}_\cS
   = \vdual{f}{v}_{\Omega_1},\\
     \vdual{\talpha\nabla u_2 - \bbeta u_2}{\nabla w}_{\Omega_2}
   + \vdual{\gamma u_2}{w}_{\Omega_2}
   + \dual{\wat\sigma}{w}_{\Gamma}
   =
   \vdual{f}{w}_{\Omega_2}\\
   \text{for any}\quad (\vv,w)\in V\times H^1_D(\Omega_2)
\end{align*}
with
\[
   \vv = (v,\ttau)\quad\text{and}\quad
   V := H^1(\TT_1)\times \HH(\mathrm{div},\TT_1).
\]
This formulation can be used to define the combined DPG-FEM discretization,
but requires that $\TT_1$ be compatible across $\Gamma$ with the finite element mesh
in $\Omega_2$. We therefore replace the continuity constraint $\wat u|_\Gamma = u_2|_\Gamma$
by a variational coupling on $\Gamma$ that is similar to a DG-bilinear form
involving jumps and fluxes across element boundaries. To this end we abbreviate
\begin{align}
   b(\uu,\vv)
   &:=
     \vdual{u_1}{\pwdiv\ttau + \bbeta\talpha^{-T}\ttau + \gamma v}_{\Omega_1}
   + \vdual{\ssigma}{\pwnabla v+\talpha^{-T}\ttau}_{\Omega_1}
   - \dual{\wat u}{\ttau\cdot\nn}_\cS - \dual{\wat\sigma}{v}_\cS,
   \nonumber\\
   c_2(u_2,w_2)
   &:=
     \vdual{\talpha\nabla u_2 - \bbeta u_2}{\nabla w_2}_{\Omega_2}
   + \vdual{\gamma u_2}{w_2}_{\Omega_2},
   \label{c2}\\
   L_1(\vv) &:= \vdual{f}{v}_{\Omega_1},\quad
   L_2(w_2) := \vdual{f}{w_2}_{\Omega_2},
   \nonumber
\end{align}
and define the coupling bilinear form
\begin{align} \label{d}
   &d(\uu,u_2;\ww,w_2)
   :=
   \dual{\wat\sigma}{w_2}_{\Gamma}
   +
   \dual{\wat\chi}{\wat u - u_2}_\Gamma
   +
   \frac 12\dual{\bbeta\cdot\nn_{\Omega_1} (\wat u-u_2)}{\wat w + w_2}_\Gamma
   \\ \nonumber
   &\text{for}\quad
   (\uu,u_2), (\ww,w_2)\in U\quad
   \text{with}\quad
   \uu=(u_1,\ssigma,\wat u,\wat\sigma),\
   \ww=(w_1,\cchi,\wat w,\wat\chi).
\end{align}
The final combined ultra-weak primal formulation of \eqref{prob} then reads
\begin{subequations} \label{VF_pre}
\begin{align}
   (\uu,u_2)=(u_1,\ssigma,\wat u,\wat\sigma,u_2)\in U:&\nonumber\\
   b(\uu,\vv) &= L_1(\vv)
                          &&\hspace{-6em}\forall\vv\in V,        \label{VF_pre1}\\
   c_2(u_2,w_2) + d(\uu,u_2;\ww,w_2) &= L_2(w_2)
                          &&\hspace{-6em}\forall (\ww,w_2)\in U. \label{VF_pre2}
\end{align}
\end{subequations}
We will also need the bilinear form for $\Omega_1$ that corresponds to $c_2(\cdot,\cdot)$:
\begin{align} \label{c1}
   c_1(u_1,w_1)
   &:=
     \vdual{\talpha\nabla u_1 - \bbeta u_1}{\nabla w_1}_{\Omega_1}
   + \vdual{\gamma u_1}{w_1}_{\Omega_1}
   \quad \bigl(u_1,w_1\in H^1(\Omega_1)\bigr).
\end{align}
For reference, we explicitly specify the strong form of \eqref{VF_pre1}:
\begin{align} \label{VF_pre1_strong}
   \uu:=(u_1,\ssigma,\wat u,\wat\sigma)\in U_1:\quad B\uu = L_1.
\end{align}
Following \cite{FuentesKDLT_CVF} one can show that \eqref{VF_pre} is equivalent to \eqref{prob}
so that, in particular, \eqref{VF_pre} has a unique solution.
However, since we will use different strategies for solving \eqref{VF_pre1} and \eqref{VF_pre2},
we need a slightly different representation.

To this end we define the trial-to-test operator $\Theta:\;U_1\to V$ by
\[
   \ip{\Theta\uu}{\vv}_V = b(\uu,\vv)\quad\forall \vv\in V.
\]
Here, $\ip{\cdot}{\cdot}_V$ denotes the canonical inner product in $V$.
Note that $\Theta=\cR^{-1}B$ with Riesz operator $\cR:\;V\to V'$.
Since $B$ is defined on $U_1$ without boundary condition along $\Gamma$ it has a non-trivial kernel,
and so does $\Theta$. Still, $\Theta:\;U_1\to V$ is surjective. Therefore, denoting by
$\Theta_\kappa:=\kappa\Theta$ the scaled trial-to-test operator (for $\kappa>0$ to be chosen),
an equivalent formulation is:
For given $\kappa>0$ find
$(\uu,u_2)\in U$ such that
\begin{align} \label{VF}
   &a(\uu,u_2;\ww,w_2) = L(\ww,w_2) \qquad\forall (\ww,w_2)\in U\\
   &\text{with}\quad
   a(\uu,u_2;\ww,w_2)
   :=
   b(\uu,\Theta_\kappa\ww) + c_2(u_2,w_2) + d(\uu,u_2;\ww,w_2)
   \nonumber\\
   &\text{and}\quad L(\ww,w_2) := L_1(\Theta_\kappa\ww) + L_2(w_2).
   \nonumber
\end{align}
One of our main results is the following theorem.

\begin{theorem} \label{thm_VF}
The variational formulation \eqref{VF} is well posed, and is equivalent to problem \eqref{prob}
in the following sense. If $u\in H^1_0(\Omega)$ solves \eqref{prob} then
$(\uu,u_2)=(u_1,\ssigma,\wat u,\wat\sigma,u_2)$, with $u_i:=u|_{\Omega_i}$ ($i=1,2$) and
$\ssigma,\wat u,\wat\sigma$ defined by \eqref{add_var}, satisfies $(\uu,u_2)\in U$ and solves \eqref{VF}.

Vice versa, if $(\uu,u_2)=(u_1,\ssigma,\wat u,\wat\sigma,u_2)\in U$ solves \eqref{VF}
then $u$ defined by $u|_{\Omega_i}:=u_i$ ($i=1,2$) satisfies $u\in H^1_0(\Omega)$ and solves \eqref{prob}.

Furthermore, for sufficiently large $\kappa>0$, the bilinear form
\(
   a(\cdot,\cdot)
\)
is $U$-coercive, i.e.,
\begin{align} \label{ell}
   a(\uu,u_2;\uu,u_2) \gtrsim \|(\uu,u_2)\|_U^2
   \quad\forall (\uu,u_2)\in U.
\end{align}
\end{theorem}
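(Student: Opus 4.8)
The plan is to establish the three assertions in turn: equivalence with \eqref{prob}, well-posedness of \eqref{VF}, and $U$-coercivity of $a(\cdot,\cdot)$, with the last being both the heart of the matter and the main obstacle. For the equivalence, I would first recall that, by the theory of \cite{FuentesKDLT_CVF}, the preliminary formulation \eqref{VF_pre} is equivalent to \eqref{prob}. The only thing to check is that replacing $b(\uu,\cdot)=L_1$ on $V$ by $b(\uu,\Theta_\kappa\ww)=L_1(\Theta_\kappa\ww)$ for $\ww\in U_1$ does not change the solution set. Since $\Theta\colon U_1\to V$ is surjective and $\Theta_\kappa=\kappa\Theta$, testing against $\Theta_\kappa\ww$ for all $\ww\in U_1$ is equivalent to testing against all of $V$; hence \eqref{VF_pre1} and the $b$-part of \eqref{VF} carry the same information, and \eqref{VF_pre2} is literally the second equation of \eqref{VF}. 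Therefore \eqref{VF} and \eqref{VF_pre} share solutions, and the equivalence with \eqref{prob} follows.

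Granting the coercivity \eqref{ell} for large $\kappa$, well-posedness is then immediate from Lax--Milgram: I would note that $a(\cdot,\cdot)$ is bounded on $U\times U$ (boundedness of $b$, $c_2$, $d$ together with boundedness of $\Theta_\kappa$, which follows from $\Theta=\cR^{-1}B$ with $B$ bounded), and that $L$ is a bounded linear functional on $U$; combined with \eqref{ell}, this yields a unique solution depending continuously on the data. So the real work is \eqref{ell}.

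For the coercivity, I would test with $(\ww,w_2)=(\uu,u_2)$ and analyze
\[
   a(\uu,u_2;\uu,u_2)
   = b(\uu,\Theta_\kappa\uu) + c_2(u_2,u_2) + d(\uu,u_2;\uu,u_2).
\]
The first term is the crucial one: by definition of $\Theta$, $b(\uu,\Theta_\kappa\uu)=\kappa\,b(\uu,\Theta\uu)=\kappa\,\ip{\Theta\uu}{\Theta\uu}_V=\kappa\|\Theta\uu\|_V^2$, and since $\Theta$ realizes the optimal test norm, $\|\Theta\uu\|_V$ is the supremum defining $\|B\uu\|_{V'}$; invoking the inf-sup stability of the ultra-weak operator $B$ restricted to the space with $\wat u|_\Gamma=0$ (equivalently, $\|B\uu\|_{V'}\gtrsim\|\uu\|_{U_1/\ker}$ modulo the kernel direction along $\Gamma$), this term controls $\|\uu\|_{U_1}^2$ up to a contribution in the trace $\wat u|_\Gamma$ that lies in the kernel of $B$. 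The middle term $c_2(u_2,u_2)$ is handled by the standard ellipticity argument for the primal form on $\Omega_2$: using $\frac12\div\bbeta+\gamma\ge0$ (and $\ge\beta>0$ on $\Omega_2$ when $\mathrm{meas}(\Gamma_2)=0$, via Assumption~1) together with an integration by parts of $\vdual{\bbeta u_2}{\nabla u_2}_{\Omega_2}=\frac12\vdual{\bbeta}{\nabla u_2^2}_{\Omega_2}$ that produces the boundary term $-\frac12\dual{\bbeta\cdot\nn_{\Omega_2}u_2}{u_2}_\Gamma=\frac12\dual{\bbeta\cdot\nn_{\Omega_1}u_2}{u_2}_\Gamma$, one gets $c_2(u_2,u_2)\gtrsim\|\nabla u_2\|_{\Omega_2}^2+\frac12\dual{\bbeta\cdot\nn_{\Omega_1}u_2}{u_2}_\Gamma$ (plus, if needed, a Friedrichs/Poincaré step to absorb $\|u_2\|_{\Omega_2}^2$).

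The decisive cancellation happens in the coupling term. Writing out $d(\uu,u_2;\uu,u_2)=\dual{\wat\sigma}{u_2}_\Gamma+\dual{\wat\sigma}{\wat u-u_2}_\Gamma+\frac12\dual{\bbeta\cdot\nn_{\Omega_1}(\wat u-u_2)}{\wat u+u_2}_\Gamma$, the first two summands combine to $\dual{\wat\sigma}{\wat u}_\Gamma$, and this flux--trace term is exactly what is needed to reconstruct, together with $b(\uu,\Theta_\kappa\uu)$ for large $\kappa$, full control of $\|\wat u\|_{H^{1/2}_D(\cS)}$ and $\|\wat\sigma\|_{H^{-1/2}(\cS)}$ including the $\Gamma$-direction missing from the inf-sup estimate — this is the same mechanism as in \cite[proof of coercivity]{FuehrerHK_CDB}. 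The last summand expands as $\frac12\dual{\bbeta\cdot\nn_{\Omega_1}\wat u}{\wat u}_\Gamma-\frac12\dual{\bbeta\cdot\nn_{\Omega_1}u_2}{u_2}_\Gamma+\tfrac12(\text{cross terms that cancel})$; the term $-\frac12\dual{\bbeta\cdot\nn_{\Omega_1}u_2}{u_2}_\Gamma$ exactly cancels the boundary term $+\frac12\dual{\bbeta\cdot\nn_{\Omega_1}u_2}{u_2}_\Gamma$ left over from $c_2(u_2,u_2)$, while the companion term $+\frac12\dual{\bbeta\cdot\nn_{\Omega_1}\wat u}{\wat u}_\Gamma$ is the one to be absorbed by the (large) contribution of $b(\uu,\Theta_\kappa\uu)$ to $\|\wat u\|^2$. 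Then I would combine: choose $\kappa$ large enough that $\kappa\|\Theta\uu\|_V^2$ dominates both the indefinite $\bbeta$-boundary term and the coupling of $\wat\sigma$ with $\wat u$, summing the controlled quantities to obtain $a(\uu,u_2;\uu,u_2)\gtrsim\|u_1\|_{\Omega_1}^2+\|\ssigma\|_{\Omega_1}^2+\|\wat u\|_{H^{1/2}_D(\cS)}^2+\|\wat\sigma\|_{H^{-1/2}(\cS)}^2+\|u_2\|_{H^1_D(\Omega_2)}^2=\|(\uu,u_2)\|_U^2$. The main obstacle is precisely making the last paragraph rigorous: controlling the interface trace $\wat u|_\Gamma$ — which is invisible to the inf-sup estimate for $B$ — by carefully balancing the flux--trace pairing $\dual{\wat\sigma}{\wat u}_\Gamma$ against a trace inequality on $\Gamma$ and the surplus coming from the large weight $\kappa$, uniformly in the mesh, and ensuring the indefinite advective boundary terms are fully absorbed rather than merely cancelled.
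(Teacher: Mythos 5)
Your skeleton (diagonal testing, Lax--Milgram once coercivity and boundedness are in hand, the identity $d(\uu,u_2;\uu,u_2)=\dual{\wat\sigma}{\wat u}_\Gamma+\tfrac12\dual{\bbeta\cdot\nn_{\Omega_1}\wat u}{\wat u}_\Gamma-\tfrac12\dual{\bbeta\cdot\nn_{\Omega_1}u_2}{u_2}_\Gamma$, and the cancellation of the $u_2$-boundary term against the one produced by $c_2$) agrees with the paper, but the decisive step is exactly the one you leave as a plan, and the mechanism you propose for it cannot work. You claim that for $\kappa$ large the term $\kappa\,b(\uu,\Theta\uu)=\kappa\|B\uu\|_{V'}^2$ contributes to $\|\wat u\|_{H^{1/2}_D(\cS)}^2$ (``including the $\Gamma$-direction'') and can absorb the indefinite interface terms $\dual{\wat\sigma}{\wat u}_\Gamma$ and $\tfrac12\dual{\bbeta\cdot\nn_{\Omega_1}\wat u}{\wat u}_\Gamma$. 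It cannot: by Lemma~\ref{la_ext_ker}, $\ker B=\ext H^{1/2}_{00}(\Gamma)$ on $U_1$, so there are $\uu$ with $b(\uu,\Theta\uu)=0$ whose trace $\wat u|_\Gamma$ and flux $\wat\sigma$ are arbitrary; multiplying zero by a large $\kappa$ gains nothing in these directions, and the pairing $\dual{\wat\sigma}{\wat u}_\Gamma$ has no sign. The paper supplies the missing positive quantity via the $A$-harmonic extension $\uu^e=(u_1^e,\ssigma^e,\wat u^e,\wat\sigma^e):=\ext\trace(\uu)$: split $\uu=(\uu-\uu^e)+\uu^e$, control $\uu-\uu^e\in U_{1,0}$ by $\|B\uu\|_{V'}=b(\uu,\Theta\uu)^{1/2}$ (Lemma~\ref{la_B}, estimate \eqref{pf1}), and control the kernel part by Lemma~\ref{la_ell_c} applied to $u_1^e$ (whose trace on $\Gamma$ is $\wat u$) together with the integration-by-parts identity $c_1(u_1^e,u_1^e)=\dual{\wat\sigma^e}{\wat u}_\Gamma$. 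In that argument the combination $\dual{\wat\sigma^e}{\wat u}_\Gamma+\tfrac12\dual{\bbeta\cdot\nn_{\Omega_1}\wat u}{\wat u}_\Gamma$ is not an obstacle to be absorbed but precisely the coercive quantity bounding $\|\wat u\|_{H^{1/2}_{00}(\Gamma)}^2$, and the large $\kappa$ is needed only to absorb, via duality and Young, the residual $\dual{\wat\sigma^e-\wat\sigma}{\wat u}_\Gamma\lesssim b(\uu,\Theta\uu)^{1/2}\|(\uu,u_2)\|_U$. Without the extension operator and this identity, your ``balancing against a trace inequality'' has nothing positive to balance with, so the coercivity \eqref{ell} remains unproven.

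A secondary point: your equivalence argument treats \eqref{VF} as if it were the two separate equations \eqref{VF_pre}, but \eqref{VF} is a single summed equation in which the same test element $\ww$ enters both $b(\uu,\Theta_\kappa\ww)$ and $d(\uu,u_2;\ww,w_2)$, so the decoupling is not immediate from surjectivity of $\Theta$ alone. The clean route, as in the paper, is one-directional plus uniqueness: the solution of \eqref{prob} produces a solution of \eqref{VF} by construction (using $d(\uu,u_2;\ww,w_2)=\dual{\wat\sigma}{w_2}_\Gamma$ when $\wat u|_\Gamma=u_2|_\Gamma$), and once coercivity and boundedness give uniqueness of the solution of \eqref{VF} via Lax--Milgram, the converse statement follows.
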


\begin{proof}
By the assumptions on $\Omega$, $f$, $\talpha$, $\bbeta$, and $\gamma$, problem \eqref{prob}
is uniquely solvable. Furthermore, by the derivation of \eqref{VF}, if $u\in H^1_0(\Omega)$
solves \eqref{prob} then $(\uu,u_2)$ as specified in the statement solves \eqref{VF}.
This can be seen by integrating by parts and noting that
$d(\uu,u_2;\ww,w_2)=\dual{\wat\sigma}{w_2}_\Gamma$ since $\wat u|_\Gamma=u_2|_\Gamma$, cf.~\eqref{d}.

The coercivity of $a(\cdot,\cdot)$ will be shown in Section~\ref{sec_pf_ell} under the assumption
that $\kappa>0$ is large enough. It is also straightforward to show that
this bilinear form is bounded on $U\times U$,
as is the linear functional $L$ on $U$. In that case the Lax-Milgram lemma proves the
well-posedness of \eqref{VF}.

Now, since $\kappa$ introduces only a scaling of the test functions $\Theta_\kappa\ww\in V$,
the variational formulation \eqref{VF} is actually independent of $\kappa\not=0$, and so is its well-posedness.
\end{proof}

As previously mentioned, the continuity constraint $\wat u|_\Gamma=u_2|_\Gamma$ can also be
imposed strongly. In this case the solution space is
\begin{align*}
   U^0 := \{(u_1,\ssigma,\wat u,\wat\sigma;u_2)\in U;\; \wat u|_\Gamma = u_2|_\Gamma\}
\end{align*}
and the coupling bilinear form reduces to
\begin{align*}
   d^0(\uu,w_2) := d(\uu,u_2;\ww,w_2) = \dual{\wat\sigma}{w_2}_{\Gamma}
   \quad\forall
   (\uu,u_2)=(u_1,\ssigma,\wat u,\wat\sigma,u_2),\ (\ww,w_2)\in U^0.
\end{align*}
The variational formulation becomes: For given $\kappa>0$ find $(\uu,u_2)\in U^0$ such that
\begin{align}
   &a^0(\uu,u_2;\ww,w_2) = L(\ww,w_2) \qquad\forall (\ww,w_2)\in U^0
   \label{VF0}\\
   &\text{with}\quad
   a^0(\uu,u_2;\ww,w_2)
   :=
   b(\uu,\Theta_\kappa\ww) + c_2(u_2,w_2) + d^0(\uu;w_2)
   \label{a0}\\
   &\text{and}\quad L(\ww,w_2) := L_1(\Theta_\kappa\ww) + L_2(w_2).
   \nonumber
\end{align}
Analogously as Theorem~\ref{thm_VF} one obtains the well-posedness of \eqref{VF0} and
coercivity of $a^0(\cdot,\cdot)$.

\begin{cor} \label{cor_VF}
The variational formulation \eqref{VF0} is well posed, and is equivalent to problem \eqref{prob}
in the following sense. If $u\in H^1_0(\Omega)$ solves \eqref{prob} then
$(\uu,u_2)=(u_1,\ssigma,\wat u,\wat\sigma,u_2)$, with $u_i:=u|_{\Omega_i}$ ($i=1,2$) and
$\ssigma,\wat u,\wat\sigma$ defined by \eqref{add_var}, satisfies $(\uu,u_2)\in U^0$ and solves \eqref{VF0}.

Vice versa, if $(\uu,u_2)=(u_1,\ssigma,\wat u,\wat\sigma,u_2)\in U^0$ solves \eqref{VF0}
then $u$ defined by $u|_{\Omega_i}:=u_i$ ($i=1,2$) satisfies $u\in H^1_0(\Omega)$ and solves \eqref{prob}.

Furthermore, for sufficiently large $\kappa>0$, the bilinear form
\(
   a^0(\cdot,\cdot)
\)
is $U^0$-coercive, i.e.,
\begin{align*} 
   a(\uu,u_2;\uu,u_2) \gtrsim \|(\uu,u_2)\|_U^2
   \quad\forall (\uu,u_2)\in U^0.
\end{align*}
\end{cor}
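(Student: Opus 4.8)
The plan is to exploit that $U^0$ is a closed subspace of the Hilbert space $U$ on which the coupling form $d(\cdot,\cdot)$ degenerates to $d^0(\cdot,\cdot)$, so that $a^0(\cdot,\cdot)$ is nothing but the restriction of $a(\cdot,\cdot)$ to $U^0\times U^0$ and all assertions follow from Theorem~\ref{thm_VF} with essentially no extra work.

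First I would note that the constraint $\wat u|_\Gamma=u_2|_\Gamma$ defining $U^0$ is the kernel of a bounded linear map on $U$, since the trace maps $\wat u\mapsto\wat u|_\Gamma$ on $H^{1/2}_D(\cS)$ and $u_2\mapsto u_2|_\Gamma$ on $H^1_D(\Omega_2)$ are bounded by the very construction of these spaces; hence $U^0$ is closed in $U$ and is itself a Hilbert space. Next, for $(\uu,u_2),(\ww,w_2)\in U^0$ one has $\wat u-u_2=0$ on $\Gamma$, so the last two terms of $d(\uu,u_2;\ww,w_2)$ in \eqref{d} vanish and $d(\uu,u_2;\ww,w_2)=\dual{\wat\sigma}{w_2}_\Gamma=d^0(\uu;w_2)$. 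Comparing \eqref{VF} with \eqref{a0}, this gives $a^0(\uu,u_2;\ww,w_2)=a(\uu,u_2;\ww,w_2)$ for all such pairs, while $L$ is literally the same functional. Since $a(\cdot,\cdot)$ is bounded on $U\times U$ and $L$ on $U$, the restrictions $a^0(\cdot,\cdot)$ and $L$ are bounded on $U^0\times U^0$ and $U^0$, respectively.

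Coercivity is then immediate: choosing $\kappa$ as large as required in Theorem~\ref{thm_VF}, every $(\uu,u_2)\in U^0\subseteq U$ satisfies $a^0(\uu,u_2;\uu,u_2)=a(\uu,u_2;\uu,u_2)\gtrsim\|(\uu,u_2)\|_U^2$ by \eqref{ell}, so no larger threshold for $\kappa$ is needed than for the unconstrained formulation. Together with boundedness, the Lax--Milgram lemma on the Hilbert space $U^0$ yields the well-posedness of \eqref{VF0}. For the equivalence with \eqref{prob}: if $u\in H^1_0(\Omega)$ solves \eqref{prob}, then the tuple $(\uu,u_2)$ built from $u$ via \eqref{add_var} solves \eqref{VF} by Theorem~\ref{thm_VF}; since $u\in H^1(\Omega)$ has matching traces across $\Gamma$ we have $\wat u|_\Gamma=u_2|_\Gamma$, so the tuple lies in $U^0$, and restricting the test functions in \eqref{VF} to $U^0$ (where $a=a^0$ and the functionals coincide) shows it solves \eqref{VF0}. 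Conversely, \eqref{VF0} has a unique solution, and this exact-solution tuple is one such; hence any $(\uu,u_2)\in U^0$ solving \eqref{VF0} coincides with it, so the glued function $u$ equals the solution of \eqref{prob} and, in particular, $u\in H^1_0(\Omega)$.

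I do not expect a genuine obstacle here; the whole content is that passing to $U^0$ annihilates the jump contributions in $d(\cdot,\cdot)$. The only points deserving a careful line are that $U^0$ is closed (so that Lax--Milgram is available on it) and that, in the converse direction, one argues via uniqueness of the $U^0$-solution rather than trying to re-derive the strong form of \eqref{prob}. It is also worth remarking explicitly that coercivity of $a^0(\cdot,\cdot)$ holds with the same $\kappa$-threshold as in Theorem~\ref{thm_VF}, being inherited verbatim on the subspace $U^0\subseteq U$.
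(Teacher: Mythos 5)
Your argument is correct, and it is worth noting that it proceeds slightly differently from the paper: the paper dispatches the corollary with the remark that one argues ``analogously to Theorem~\ref{thm_VF}'', i.e.\ it implicitly proposes re-running the coercivity chain of Section~\ref{sec_pf_ell} inside the constrained setting, whereas you obtain everything as a literal consequence of Theorem~\ref{thm_VF} by restriction. Your key observations are exactly the right ones: $U^0$ is the kernel of the bounded map $(\uu,u_2)\mapsto \trace(\uu)-u_2|_\Gamma$ (Lemma~\ref{la_trace} plus the $H^1(\Omega_2)$-trace), hence a closed subspace and itself a Hilbert space; for trial functions in $U^0$ the jump terms of $d(\cdot,\cdot)$ in \eqref{d} vanish for \emph{arbitrary} test functions, so $a^0$ coincides with $a$ whenever the trial argument lies in $U^0$ (this slightly stronger fact is what makes your forward equivalence argument work when you restrict the test space); and coercivity \eqref{ell}, being valid on all of $U$, is inherited verbatim on $U^0$ with the same $\kappa$-threshold and constant, which the ``redo the proof'' route would only recover after rechecking \eqref{pf0}--\eqref{pf4}. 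Your converse direction via uniqueness of the $U^0$-solution is also sound and avoids re-deriving the strong form. The only cosmetic gap is that, as in the proof of Theorem~\ref{thm_VF}, well-posedness of \eqref{VF0} for \emph{every} $\kappa>0$ (not just large $\kappa$) should be noted via the observation that $\Theta_\kappa$ merely rescales the test functions, so the formulation is independent of $\kappa\neq 0$; adding that one sentence makes your proof complete.
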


\subsection{Combined DPG-FEM discretization}\label{sec_DPG}

The coupled DPG-FEM method consists in solving \eqref{VF} within finite-dimensional
subspaces $U_\hp\subset U$. The indices $h$ and $p$ indicate that this can be
piecewise polynomial, conforming spaces of certain polynomial degrees.
Specifically, the components of $U_\hp$ that belong to the unknowns
$u_1,\ssigma,\wat u,\wat\sigma$ will be piecewise polynomial with respect to the
mesh $\TT_1$ and its skeleton $\cS$. On the other hand, the component of $U_\hp$ that
approximates $u_2$ is piecewise polynomial with respect to a mesh $\TT_2$ in $\Omega_2$.
In the current form we do not need compatibility of the meshes $\TT_1$, $\TT_2$ along $\Gamma$.
The discrete scheme then reads:
For given $\kappa>0$ find $(\uu_\hp,u_{2,\hp})\in U_\hp$ such that
\begin{align} \label{DPG}
   a(\uu_\hp,u_{2,\hp};\ww,w_2) = L(\ww,w_2) \quad\forall (\ww,w_2)\in U_\hp.
\end{align}
Note that this formulation includes the use of optimal test functions for the discretization
in $\Omega_1$, cf.~\eqref{VF_pre1} and the corresponding terms in \eqref{VF} with
trial-to-test operator $\Theta_\kappa$. On the other hand, the part of the problem that belongs
to $\Omega_2$ is solved by standard finite elements, cf. the corresponding relation~\eqref{VF_pre2}.

Our second main result is the following theorem.

\begin{theorem} \label{thm_DPG}
If $\kappa>0$ is sufficiently large then the scheme \eqref{DPG} is uniquely solvable and
converges quasi-optimally, i.e.,
\[
   \|\uu-\uu_\hp\|_{U_1} + \|u_2-u_{2,\hp}\|_{H^1(\Omega_2)}
   \lesssim
   \inf\{\|\uu-\ww\|_{U_1} + \|u_2-w_2\|_{H^1(\Omega_2)};\; (\ww,w_2)\in U_\hp\}.
\]
\end{theorem}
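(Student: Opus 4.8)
The plan is to deduce Theorem~\ref{thm_DPG} from Theorem~\ref{thm_VF} by the classical C\'ea-type argument for coercive conforming Galerkin discretizations. Since the discrete scheme \eqref{DPG} is obtained from the continuous formulation \eqref{VF} simply by restricting trial and test functions to the same conforming subspace $U_\hp\subset U$, and since by Theorem~\ref{thm_VF} the bilinear form $a(\cdot,\cdot)$ is bounded on $U\times U$ and $U$-coercive for $\kappa$ large enough, the Lax-Milgram lemma applied on $U_\hp$ immediately yields unique solvability of \eqref{DPG}. The only genuine points requiring care are (i) that the norm appearing in the coercivity and boundedness estimates of Theorem~\ref{thm_VF}, namely $\|(\uu,u_2)\|_U$, is equivalent to the norm $\|\uu\|_{U_1}+\|u_2\|_{H^1(\Omega_2)}$ written in the statement, which is immediate from the definition $U = U_1\times H^1_D(\Omega_2)$; and (ii) that $\kappa$ does not enter the constants in a way that destroys quasi-optimality — but as remarked after the proof of Theorem~\ref{thm_VF}, \eqref{VF} (and hence \eqref{DPG}) is in fact independent of $\kappa\neq 0$ up to the scaling of test functions, so this is harmless.

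The key steps, in order, are as follows. First I would invoke Theorem~\ref{thm_VF} to fix $\kappa>0$ large enough that $a(\uu,u_2;\uu,u_2)\gtrsim \|(\uu,u_2)\|_U^2$ for all $(\uu,u_2)\in U$, and note that $a$ is bounded, $|a(\uu,u_2;\ww,w_2)|\lesssim \|(\uu,u_2)\|_U\|(\ww,w_2)\|_U$, and $L$ is continuous on $U$. Second, since $U_\hp$ is a closed subspace of the Hilbert space $U$, the same coercivity and boundedness hold on $U_\hp$, so Lax-Milgram gives a unique solution $(\uu_\hp,u_{2,\hp})\in U_\hp$ of \eqref{DPG}. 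Third, because $U_\hp\subset U$ and test functions in \eqref{DPG} range over the same subspace used in \eqref{VF} restricted to $U_\hp$, Galerkin orthogonality holds: $a(\uu-\uu_\hp,u_2-u_{2,\hp};\ww,w_2)=0$ for all $(\ww,w_2)\in U_\hp$. Fourth, for arbitrary $(\ww,w_2)\in U_\hp$ write $\uu_\hp-\ww$ (resp.\ $u_{2,\hp}-w_2$) in the coercivity estimate, use orthogonality to replace $(\uu_\hp,u_{2,\hp})$ by $(\uu,u_2)$ in one slot, apply boundedness, and divide, obtaining the standard bound
\[
   \|(\uu,u_2)-(\uu_\hp,u_{2,\hp})\|_U
   \lesssim
   \inf_{(\ww,w_2)\in U_\hp} \|(\uu,u_2)-(\ww,w_2)\|_U.
\]
Finally, rewrite both sides using the product structure of $\|\cdot\|_U$ to recover exactly the claimed estimate in terms of $\|\cdot\|_{U_1}$ and $\|\cdot\|_{H^1(\Omega_2)}$.

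I do not expect a serious obstacle here: the argument is entirely routine once Theorem~\ref{thm_VF} is in hand. If anything, the part needing the most care is bookkeeping the definition of the trial-to-test operator $\Theta_\kappa$ inside $a(\cdot,\cdot)$ — one should check that evaluating $b(\uu_\hp,\Theta_\kappa\ww)$ with $\ww\in U_\hp$ is well defined (it is, since $\Theta_\kappa:U_1\to V$ maps into the \emph{full} broken test space $V$, so no discrete test-space approximation is involved in the analysis). The honest subtlety, deferred to Section~\ref{sec_tec}, is the proof of the coercivity \eqref{ell} itself; the quasi-optimality in Theorem~\ref{thm_DPG} is then a corollary of it. Hence the bulk of the work for Theorem~\ref{thm_DPG} is citing Theorem~\ref{thm_VF} and running C\'ea's lemma, and I would present it compactly in that spirit.
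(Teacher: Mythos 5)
Your proposal is correct and follows essentially the same route as the paper, whose proof of Theorem~\ref{thm_DPG} is exactly the one-line combination of the $U$-coercivity and boundedness of $a(\cdot,\cdot)$ from Theorem~\ref{thm_VF}, the Lax-Milgram lemma on the conforming subspace $U_\hp$, and C\'ea's estimate. The additional points you flag (equivalence of $\|\cdot\|_U$ with the product norm, and well-definedness of $b(\cdot,\Theta_\kappa\ww)$ for discrete $\ww$) are harmless and consistent with the paper's argument.
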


\begin{proof}
The statement is a direct implication of the $U$-coercivity of $a(\cdot,\cdot)$
for large $\kappa$ by Theorem~\ref{thm_VF}, the Lax-Milgram lemma and Cea's estimate.
\end{proof}

\begin{remark}
We note that also the discrete scheme can be changed to impose strongly the continuity of the approximations
of $\wat u$ and $u_2$ across $\Gamma$. This only requires compatibility of the meshes $\TT_1$ and $\TT_2$
along the interface, conforming subspaces $U_\hp\subset U^0$, and replacing the bilinear form $a(\cdot;\cdot)$
in \eqref{DPG} by the bilinear form $a^0(\cdot;\cdot)$, cf.~\eqref{a0}.
The quasi-optimal error estimate from Theorem~\ref{thm_DPG} then holds analogously.
\end{remark}

\section{Technical details and proof of coercivity}\label{sec_tec}

We start with recalling the $H^1_0(\Omega)$-coercivity of the full differential operator $A$.
This transforms into the following properties of the bilinear forms
$c_2$, $c_1$, cf.~\eqref{c2},~\eqref{c1}.

\begin{lemma} \label{la_ell_c}
The bilinear forms $c_1(\cdot,\cdot)$ and $c_2(\cdot,\cdot)$ satisfy
\begin{align*}
   c_i(u_i,u_i) + \frac 12 \dual{\bbeta\cdot\nn_{\Omega_i} u_i}{u_i}_{\Gamma}
   \gtrsim
   \|u_i\|_{H^1(\Omega_i)}^2
\end{align*}
for all $u_i\in H^1_D(\Omega_i)$ ($i=1,2$).
\end{lemma}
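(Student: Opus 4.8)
The plan is to reduce the claimed estimate to the standard $H^1_0(\Omega)$-coercivity of $A$ by an integration-by-parts argument applied on each sub-domain $\Omega_i$ separately. First I would expand $c_i(u_i,u_i)$ using \eqref{c2} (resp.\ \eqref{c1}) and split it into the symmetric diffusion part $\vdual{\talpha\nabla u_i}{\nabla u_i}_{\Omega_i}$, the advection part $-\vdual{\bbeta u_i}{\nabla u_i}_{\Omega_i}$, and the reaction part $\vdual{\gamma u_i}{u_i}_{\Omega_i}$. The key manipulation is on the advection term: since $\bbeta u_i\nabla u_i = \tfrac12\bbeta\nabla(u_i^2)$, integration by parts over $\Omega_i$ gives
\begin{align*}
   -\vdual{\bbeta u_i}{\nabla u_i}_{\Omega_i}
   = \tfrac12\vdual{(\div\bbeta)\, u_i}{u_i}_{\Omega_i}
     - \tfrac12\dual{\bbeta\cdot\nn_{\Omega_i}\, u_i}{u_i}_{\partial\Omega_i},
\end{align*}
where the boundary integral is over all of $\partial\Omega_i$. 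Because $u_i\in H^1_D(\Omega_i)$ vanishes on $\Gamma_i$, and $\partial\Omega_i$ decomposes (up to measure zero) into $\Gamma_i$ and $\Gamma$, the boundary term collapses to $-\tfrac12\dual{\bbeta\cdot\nn_{\Omega_i}\,u_i}{u_i}_{\Gamma}$. Moving this term to the left-hand side of the asserted inequality exactly cancels it, so that
\begin{align*}
   c_i(u_i,u_i) + \tfrac12\dual{\bbeta\cdot\nn_{\Omega_i}\,u_i}{u_i}_{\Gamma}
   = \vdual{\talpha\nabla u_i}{\nabla u_i}_{\Omega_i}
     + \vdual{(\tfrac12\div\bbeta + \gamma)\,u_i}{u_i}_{\Omega_i}.
\end{align*}

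Next I would bound this from below. The diffusion term is $\ge \alpha_0\|\nabla u_i\|_{L_2(\Omega_i)}^2$ by the uniform positive-definiteness of the symmetric part of $\talpha$, and the zeroth-order term is nonnegative by the global assumption $\tfrac12\div\bbeta+\gamma\ge0$. To upgrade the gradient bound to a full $H^1(\Omega_i)$-norm bound I would distinguish the two cases of Assumption 1. If $\mathrm{meas}(\Gamma_i)>0$, a Poincar\'e–Friedrichs inequality on $H^1_D(\Omega_i)$ (functions vanishing on a piece of positive measure) gives $\|u_i\|_{L_2(\Omega_i)}\lesssim\|\nabla u_i\|_{L_2(\Omega_i)}$, hence $\vdual{\talpha\nabla u_i}{\nabla u_i}_{\Omega_i}\gtrsim\|u_i\|_{H^1(\Omega_i)}^2$. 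If $\mathrm{meas}(\Gamma_i)=0$, Assumption 1 provides $\beta>0$ with $\tfrac12\div\bbeta+\gamma\ge\beta$ a.e.\ in $\Omega_i$, so the zeroth-order term dominates $\beta\|u_i\|_{L_2(\Omega_i)}^2$ and, combined with the diffusion term, again yields the full $H^1(\Omega_i)$-norm on the right. In either case the implied constant depends only on $\alpha_0$, $\beta$, and the Poincar\'e constant of the geometry, not on $u_i$.

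The only technical point requiring care — and the closest thing to an obstacle — is justifying the integration-by-parts identity at the level of $H^1$ regularity: the product $u_i^2$ lies in $W^{1,1}(\Omega_i)$ and $\bbeta$ is merely bounded and "sufficiently regular", so one should either invoke a density argument (approximating $u_i$ by smooth functions and passing to the limit, using continuity of the trace $H^1(\Omega_i)\to L_2(\partial\Omega_i)$ and boundedness of $\div\bbeta$) or note that the boundary term is interpreted as the duality pairing $\dual{\cdot}{\cdot}_{\partial\Omega_i}$ extending the $L_2$ form, exactly as set up in Section \ref{sec_math}. Once this identity is in place, and once one records that the restriction $u_i\mapsto u_i|_{\Gamma_i}=0$ together with $\partial\Omega_i = \bar\Gamma_i\cup\bar\Gamma$ forces the boundary integral onto $\Gamma$ alone, the rest is the elementary lower bound described above. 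I would present the $i=1$ and $i=2$ arguments in parallel, since they are identical after replacing $\Omega_1$ by $\Omega_2$.
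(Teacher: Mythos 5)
Your proposal is correct and follows essentially the same route as the paper: integrate the advection term by parts, use $u_i|_{\Gamma_i}=0$ to reduce the boundary term to $\Gamma$ so it cancels with the added term, and then conclude via the positive-definiteness of the symmetric part of $\talpha$ together with either the Poincar\'e--Friedrichs inequality (when $\mathrm{meas}(\Gamma_i)\neq 0$) or Assumption~1. The extra remarks on justifying the integration by parts at $H^1$ regularity are fine but not needed beyond what the paper's setting already provides.
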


\begin{proof}
Noting that
\[
   \vdual{\bbeta u}{\nabla u}_{\Omega_i}
   =
   -\frac 12 \vdual{(\div\bbeta) u}{u}_{\Omega_i}
   +\frac 12 \dual{\bbeta\cdot\nn_{\Omega_i} u}{u}_{\partial\Omega_i}
   \qquad \bigl(u\in H^1(\Omega_i),\ i=1,2\bigr),
\]
there holds for $u_i\in H^1_D(\Omega_i)$ ($i=1,2$)
\begin{align*}
   c_i(u_i,u_i)
   &=
   \vdual{\talpha\nabla u_i - \bbeta u_i}{\nabla u_i}_{\Omega_i} + \vdual{\gamma u_i}{u_i}_{\Omega_i}
   \\
   &=
   \vdual{\talpha\nabla u_i}{\nabla u_i}_{\Omega_i}
   + \vdual{(\frac 12\div\bbeta + \gamma) u_i}{u_i}_{\Omega_i}
   - \frac 12 \dual{\bbeta\cdot\nn_{\Omega_i} u_i}{u_i}_{\Gamma}.
\end{align*}
The coercivity property then follows with the positivity of the symmetric part of $\talpha$ and
by using either the Poincar\'e-Friedrichs inequality and $\frac 12\div\bbeta+\gamma\ge 0$ in $\Omega_i$
(if $\mathrm{meas}(\Gamma_i)\not=0$)
or Assumption~1, i.e., $\frac 12\div\bbeta+\gamma\ge\beta_i>0$ in $\Omega_i$ ($i=1,2$).
\end{proof}

We continue with some properties of the operator $B$, cf.~\eqref{VF_pre1_strong}, when restricted
to the space incorporating homogeneous Dirichlet boundary conditions on the whole of
$\partial\Omega_1$, that is,
\begin{align} \label{U10}
   B:\;U_{1,0}:=L_2(\Omega_1)\times\LL_2(\Omega_1)\times H^{1/2}_{00}(\cS)\times H^{-1/2}(\cS)\ \to\ V'.
\end{align}

\begin{lemma} \label{la_B}
The operator $B:\;U_{1,0}\to V'$ is an isomorphism with $\|B\|_{\cL(U_{1,0},V')}$ and
$\|B^{-1}\|_{\cL(V',U_{1,0})}$ bounded independently of the mesh $\TT_1$.
\end{lemma}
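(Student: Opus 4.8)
The plan is to establish Lemma~\ref{la_B} by verifying the hypotheses of the standard well-posedness theory for ultra-weak DPG formulations, as developed by Demkowicz and Gopalakrishnan and subsequently abstracted (e.g.\ in Carstensen--Demkowicz--Gopalakrishnan). The operator $B$ corresponds to the bilinear form $b(\uu,\vv)$ on $U_{1,0}\times V$, and the key point is that this is precisely the broken ultra-weak formulation of the first-order system $\ssigma=\talpha\nabla u-\bbeta u$, $\div\ssigma+\gamma u=f$ on $\Omega_1$ with homogeneous Dirichlet data on the \emph{entire} boundary $\partial\Omega_1$. Because the full Dirichlet condition is now imposed (contrast with $U_1$, which only constrains $\Gamma_1$), the associated operator $A$ restricted to $H^1_0(\Omega_1)$ is boundedly invertible by the coercivity assumptions on $\talpha$, $\bbeta$, $\gamma$ recalled at the end of Section~\ref{sec_math}.

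The steps I would carry out are the following. First, boundedness of $B$: each term of $b(\uu,\vv)$ is estimated by Cauchy--Schwarz in $L_2(\Omega_1)$ for the volume terms and by the definitions of the norms \eqref{Hpm} on $H^{1/2}_{00}(\cS)$ and $H^{-1/2}(\cS)$ together with the duality pairings $\dual{\cdot}{\cdot}_\cS$ for the skeleton terms; the constant depends only on the $L_\infty$-bounds of the coefficients and the ellipticity constant $\alpha_0$ (through $\talpha^{-T}$), hence not on $\TT_1$. Second, injectivity: if $b(\uu,\vv)=0$ for all $\vv\in V$, then testing with $\vv=(v,0)$ and $\vv=(0,\ttau)$ over unbroken test functions $v\in H^1_0(\Omega_1)$, $\ttau\in\HH(\div,\Omega_1)$ shows that $(u_1,\ssigma)$ solves the first-order system in the distributional sense with $f=0$, and then the broken terms force $\wat u$, $\wat\sigma$ to be the genuine traces of $u_1$, $\ssigma$; invertibility of $A$ on $H^1_0(\Omega_1)$ gives $u_1=0$, hence $\ssigma=0$ and the traces vanish. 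Third, and most importantly, the inf-sup condition: I would show
\begin{align*}
  \|\uu\|_{U_{1,0}} \lesssim \sup_{\vv\in V}\frac{b(\uu,\vv)}{\|\vv\|_V}
\end{align*}
with a constant independent of $\TT_1$. The standard route is to invoke the abstract result that for broken test spaces the inf-sup constant of $b$ equals (up to the norm equivalence constants) the inf-sup constant of the corresponding \emph{unbroken} formulation, which here is governed by the well-posedness of $A^{-1}:L_2(\Omega_1)\to H^1_0(\Omega_1)$ and of the graph-norm adjoint problem. Concretely, given $\uu$ one constructs a test function by solving the adjoint boundary value problem $A^\ast z = $ (a suitable combination built from $u_1,\ssigma$) with homogeneous data, sets $v=z$, $\ttau=$ the corresponding flux, and checks that $b(\uu,(v,\ttau))\gtrsim\|\uu\|_{U_{1,0}}^2$ while $\|(v,\ttau)\|_V\lesssim\|\uu\|_{U_{1,0}}$; the skeleton components $\wat u,\wat\sigma$ are controlled via the norm definitions \eqref{Hpm} and a trace-lifting argument. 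Finally, surjectivity of $B$ follows from injectivity of $B$ together with the inf-sup estimate (closed range) and the density of the broken test space, or equivalently by the Babuška--Nečas theorem once the inf-sup condition on $b$ and the nondegeneracy $\sup_{\uu}b(\uu,\vv)>0$ for $\vv\neq0$ are established.

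The main obstacle is the mesh-independence of the inf-sup constant in the third step. The danger is that the skeleton norms \eqref{Hpm} — defined via minimal $H^1(\Omega_1)$- resp.\ $\HH(\div,\Omega_1)$-extensions — interact with the broken test norm in a way whose stability constant could a priori deteriorate as $\TT_1$ is refined. The resolution is the by-now classical observation (Carstensen--Demkowicz--Gopalakrishnan, "Breaking spaces and forms") that breaking the test space and simultaneously equipping the interface unknowns with the minimal-extension norms yields an inf-sup constant identical to that of the conforming formulation; I would cite this and verify that our hypotheses — invertibility of $A$ and $A^\ast$ on the $H^1_0(\Omega_1)$/graph-norm setting, which hold uniformly by the coefficient assumptions — are exactly what that abstract framework requires. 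With that in hand the bound is genuinely $\TT_1$-independent, and all other steps are routine.
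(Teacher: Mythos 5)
Your proposal is correct and rests on the same key ingredient as the paper's proof: the paper disposes of Lemma~\ref{la_B} simply by citing Carstensen--Demkowicz--Gopalakrishnan (``breaking spaces and forms'', Example~3.7 there), i.e.\ exactly the result you invoke that breaking the test space and using the canonical minimal-extension trace norms \eqref{Hpm} preserves the well-posedness (and the inf-sup constant) of the unbroken ultra-weak formulation, which in turn is uniformly well posed because $A$ (and its adjoint) is coercive on $H^1_0(\Omega_1)$ under the stated coefficient assumptions. Your additional sketch of boundedness, injectivity and the adjoint-based inf-sup construction is a sound, more detailed unfolding of what that cited framework provides, not a different route.
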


\begin{proof}
This is a particular case of the different variational formulations studied in
\cite[Example 3.7]{CarstensenDG_16_BSF}. More generally, in \cite{CarstensenDG_16_BSF},
Carstensen, Demkowicz and Gopalakrishnan proved that ``breaking'' a continuous variational formulation
of a well-posed problem (by introducing broken test spaces) and using canonical trace norms,
this does not alter the well-posedness of the formulation.
\end{proof}

Let us introduce the trace space
$H^{1/2}_{00}(\Gamma):=H^1_0(\Omega)|_\Gamma$
with canonical norm.
To simplify the presentation of some technical details we will need the following trace operator,
\begin{align*}
    \trace:\;U_1\to H^{1/2}_{00}(\Gamma),\quad
    \trace(u,\ssigma,\wat u,\wat\sigma):=\wat u|_\Gamma.
\end{align*}
The boundedness of this operator is immediate, and is analogous to the case of
the Laplacian on a single domain considered in \cite[Lemma 4]{FuehrerHK_CDB}.

\begin{lemma} \label{la_trace}
The operator $\trace$ is bounded with bound independent of $\TT_1$.
\end{lemma}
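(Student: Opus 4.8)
The plan is to show that $\trace:U_1\to H^{1/2}_{00}(\Gamma)$ is bounded, i.e.\ that $\|\wat u|_\Gamma\|_{H^{1/2}_{00}(\Gamma)}\lesssim \|(u_1,\ssigma,\wat u,\wat\sigma)\|_{U_1}$ with a constant independent of $\TT_1$. Since $\|(u_1,\ssigma,\wat u,\wat\sigma)\|_{U_1}^2 = \|u_1\|_{L_2(\Omega_1)}^2 + \|\ssigma\|_{\LL_2(\Omega_1)}^2 + \|\wat u\|_{H^{1/2}_D(\cS)}^2 + \|\wat\sigma\|_{H^{-1/2}(\cS)}^2$, it suffices to bound $\|\wat u|_\Gamma\|_{H^{1/2}_{00}(\Gamma)}$ by $\|\wat u\|_{H^{1/2}_D(\cS)}$ alone; the other three components are irrelevant. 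So the first step is to reduce to a statement purely about skeleton trace spaces.

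Next I would unfold the definitions of the two norms involved. By definition of the norm on $H^{1/2}_D(\cS)$ (the analogue of \eqref{Hpm} for the subspace), we have
\begin{align*}
   \|\wat u\|_{H^{1/2}_D(\cS)}
   = \inf\bigl\{\|w\|_{H^1(\Omega)};\; w\in H^1_D(\Omega_1),\ w|_{\partial\el}=\wat u|_{\partial\el}\ \forall\el\in\TT_1\bigr\},
\end{align*}
while $\|\wat u|_\Gamma\|_{H^{1/2}_{00}(\Gamma)} = \inf\{\|z\|_{H^1(\Omega)};\; z\in H^1_0(\Omega),\ z|_\Gamma=\wat u|_\Gamma\}$ by definition of $H^{1/2}_{00}(\Gamma)=H^1_0(\Omega)|_\Gamma$. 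The natural strategy is: given $\wat u\in H^{1/2}_D(\cS)$ and any near-minimizer $w\in H^1_D(\Omega_1)$ realizing (up to $\varepsilon$) the infimum defining $\|\wat u\|_{H^{1/2}_D(\cS)}$, construct an admissible competitor $z\in H^1_0(\Omega)$ for the infimum defining $\|\wat u|_\Gamma\|_{H^{1/2}_{00}(\Gamma)}$ with $\|z\|_{H^1(\Omega)}\lesssim \|w\|_{H^1(\Omega)}$. Since $w$ vanishes on $\Gamma_1$ but need not vanish on $\Gamma_2$ or on all of $\partial\Omega$, one extends $w$ from $\Omega_1$ to $\Omega$ by a bounded extension operator $E:H^1_D(\Omega_1)\to H^1_0(\Omega)$ that preserves the trace on $\Gamma\subset\partial\Omega_1$ and produces a function vanishing on $\partial\Omega$. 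Such an extension exists because $\Gamma_1$ is a (closed) portion of $\partial\Omega$ separated from $\Gamma$: one can first extend $w$ across $\Gamma$ into $\Omega_2$ by a Stein-type extension, then multiply by a fixed cutoff that is $1$ near $\Gamma$ and $0$ near the remaining part of $\partial\Omega$; the product lies in $H^1_0(\Omega)$, agrees with $w$ on $\Gamma$ in the trace sense, and has $H^1$-norm controlled by $\|w\|_{H^1(\Omega_1)}\le\|w\|_{H^1(\Omega)}$, with constant depending only on the fixed geometry $(\Omega,\Omega_1,\Omega_2,\Gamma)$ and not on $\TT_1$. Taking $z:=Ew$ and letting $\varepsilon\to0$ gives $\|\wat u|_\Gamma\|_{H^{1/2}_{00}(\Gamma)}\le\|Ew\|_{H^1(\Omega)}\lesssim\|w\|_{H^1(\Omega)}\le(1+\varepsilon)\|\wat u\|_{H^{1/2}_D(\cS)}$, which is the claim.

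The main obstacle, and the only point requiring genuine care, is the construction of the extension operator $E$ and the verification that it maps into $H^1_0(\Omega)$ while preserving the trace on $\Gamma$ — in particular that composing a Stein extension with a cutoff does not spoil the homogeneous Dirichlet condition on $\partial\Omega$ nor change the trace on the interface. This is a standard Sobolev-extension argument on a fixed Lipschitz configuration, so the constant is automatically $\TT_1$-independent; indeed, as the authors note, it parallels \cite[Lemma 4]{FuehrerHK_CDB}. Alternatively, one can bypass the explicit extension by invoking that the $\TT_1$-piecewise trace map is, by the very definition of $H^{1/2}_D(\cS)$, a restriction of the single-domain trace map $H^1_D(\Omega_1)\ni w\mapsto w|_\Gamma\in H^{1/2}_{00}(\Gamma)$, whose boundedness is classical; the infimum structure of both norms then yields the estimate with no further work. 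Either way the proof is short once the reduction to skeleton traces is made.
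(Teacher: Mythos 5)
Your overall strategy is the right one: the operator depends only on the $\wat u$-component, so everything reduces to showing $\|\wat u|_\Gamma\|_{H^{1/2}_{00}(\Gamma)}\lesssim\|\wat u\|_{H^{1/2}_D(\cS)}$, and since both norms are infima over extensions on the fixed domains $\Omega_1,\Omega$, any bound obtained this way is automatically $\TT_1$-independent; this is also all the paper has in mind (it gives no proof beyond the reference to \cite[Lemma~4]{FuehrerHK_CDB}). The gap is in your construction of the competitor $z\in H^1_0(\Omega)$. Your justification rests on the claim that ``$\Gamma_1$ is a (closed) portion of $\partial\Omega$ separated from $\Gamma$,'' which is false in the configuration of Figure~1: $\bar\Gamma$ meets $\partial\Omega$ precisely at the endpoints of $\Gamma$ (the marked junction points), which are common endpoints of $\Gamma$, $\Gamma_1$ and $\Gamma_2$. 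Consequently there is no Lipschitz cutoff $\chi$ with $\chi\equiv1$ on $\Gamma$ and $\chi\equiv0$ near the remaining part of $\partial\Omega$: continuity forces $\chi=1$ at the junction points, while a Stein extension of $w$ into $\Omega_2$ has, in general, a nonvanishing trace on $\Gamma_2$ arbitrarily close to these points, so the product $\chi\cdot Ew$ does not vanish on all of $\partial\Omega$ and hence is not in $H^1_0(\Omega)$ (a cutoff equal to $1$ on $\Gamma$ and $0$ on $\Gamma_2$ up to a junction point would need an unbounded gradient there). In other words, the step ``the product lies in $H^1_0(\Omega)$'' fails exactly where the lemma has content; your argument is only valid in the special case $\bar\Gamma\cap\partial\Omega=\emptyset$ (the annular configuration mentioned in the paper), where $H^{1/2}_{00}(\Gamma)$ coincides with $H^{1/2}(\Gamma)$.

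The hypothesis $w|_{\Gamma_1}=0$ must enter quantitatively at the junctions, not merely to spare a cutoff on $\Gamma_1$. A correct construction: $g:=w|_{\partial\Omega_1}\in H^{1/2}(\partial\Omega_1)$ vanishes on $\Gamma_1$, hence the function on $\partial\Omega_2$ equal to $g$ on $\Gamma$ and to $0$ on $\Gamma_2$ belongs to $H^{1/2}(\partial\Omega_2)$ with norm $\lesssim\|g\|_{H^{1/2}(\partial\Omega_1)}\lesssim\|w\|_{H^1(\Omega_1)}$; this can be seen by transporting $g$ with a fixed bi-Lipschitz homeomorphism $\partial\Omega_1\to\partial\Omega_2$ that is the identity on $\Gamma$ and maps $\Gamma_1$ onto $\Gamma_2$, or by the Hardy/Lions--Magenes characterization of $\widetilde H^{1/2}(\Gamma)$. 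Lifting this datum to some $z_2\in H^1(\Omega_2)$ and gluing, $z:=w$ in $\Omega_1$, $z:=z_2$ in $\Omega_2$, gives $z\in H^1_0(\Omega)$ with $z|_\Gamma=\wat u|_\Gamma$ and $\|z\|_{H^1(\Omega)}\lesssim\|w\|_{H^1(\Omega_1)}$, and taking the infimum over admissible $w$ yields the bound. Note finally that your proposed shortcut --- invoking as ``classical'' that $H^1_D(\Omega_1)\ni w\mapsto w|_\Gamma\in H^{1/2}_{00}(\Gamma)$ is bounded --- merely restates the extension property to be proved, since the paper defines $H^{1/2}_{00}(\Gamma):=H^1_0(\Omega)|_\Gamma$ with the quotient norm; likewise, the cited \cite[Lemma~4]{FuehrerHK_CDB} concerns traces onto the whole boundary $\partial\Omega_1$, where no junction points occur, so the analogy does not dispense with this step.
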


In the following we identify the kernel of $B$ when acting on the full space $U_1$.
Let us recall that $A$ is the operator of our problem \eqref{prob}.
For given $\varphi\in H^{1/2}_{00}(\Gamma)$ we define its $A$-harmonic extension
$\ext(\varphi):=(u_1,\ssigma,\wat u,\wat\sigma)\in U_1$ by
\begin{subequations} \label{ext}
\begin{align}
   &u_1\in H^1_D(\Omega_1):\;
   Au_1=0\ \text{in}\ \Omega_1,\quad
   u_1=\varphi\ \text{on}\ \Gamma,\\
   &\ssigma = \talpha\nabla u_1-\bbeta u_1,\quad
   \wat u = u_1\ \text{on}\ \cS,\quad
   \wat\sigma = \ssigma\cdot\nn_\el\ \text{on}\ \partial\el\;\forall \el\in\TT_1.
\end{align}
\end{subequations}

\begin{lemma} \label{la_ext_ker}
The operator $\ext:\;H^{1/2}_{00}(\Gamma)\to U_1$ is bounded with bound independent of $\TT_1$.
Moreover, $\ext$ is a right-inverse of $\trace$, and the image of $\ext$ is the kernel of $B$,
$\ker B=\ext H^{1/2}_{00}(\Gamma)$.
\end{lemma}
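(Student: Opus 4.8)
The plan is to verify the three assertions in turn: boundedness of $\ext$, the right-inverse property, and the kernel characterization. Boundedness follows from the well-posedness of the interior Dirichlet problem for $A$: given $\varphi\in H^{1/2}_{00}(\Gamma)$, extend it to a function $\wilde\varphi\in H^1(\Omega_1)$ with $\wilde\varphi|_{\Gamma_1}=0$ and $\norm{\wilde\varphi}{H^1(\Omega_1)}\lesssim\norm{\varphi}{H^{1/2}_{00}(\Gamma)}$ (using that $H^{1/2}_{00}(\Gamma)$ is a trace space of $H^1_0(\Omega)$, hence also of $H^1_D(\Omega_1)$). Writing $u_1=\wilde\varphi+u_1^0$ with $u_1^0\in H^1_0(\Omega_1)$ solving $Au_1^0=-A\wilde\varphi$, the coercivity and boundedness of $A$ on $H^1_0(\Omega_1)$ (which holds under our standing hypotheses, and is essentially Lemma~\ref{la_ell_c} with $\Gamma$-terms absent since $u_1^0$ has full zero trace) give $\norm{u_1}{H^1(\Omega_1)}\lesssim\norm{\varphi}{H^{1/2}_{00}(\Gamma)}$. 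Then $\ssigma=\talpha\nabla u_1-\bbeta u_1\in\LL_2(\Omega_1)$ is bounded by $\norm{u_1}{H^1(\Omega_1)}$ using boundedness of the coefficients; $\wat u=u_1|_\cS$ is a $\TT_1$-piecewise trace of $u_1\in H^1_D(\Omega_1)$, so $\norm{\wat u}{H^{1/2}_D(\cS)}\le\norm{u_1}{H^1(\Omega_1)}$ by definition~\eqref{Hpm} of the quotient norm; and $\wat\sigma=\ssigma\cdot\nn_\el$ on each $\partial\el$ satisfies $\norm{\wat\sigma}{H^{-1/2}(\cS)}\le\norm{\ssigma}{\HH(\mathrm{div},\Omega_1)}$, where $\div\ssigma=\gamma u_1-f$... here one uses $\div\ssigma=-\gamma u_1$ (since $Au_1=0$), so $\ssigma\in\HH(\mathrm{div},\Omega_1)$ with norm controlled by $\norm{u_1}{H^1(\Omega_1)}$. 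Summing the four contributions gives $\norm{\ext(\varphi)}{U_1}\lesssim\norm{\varphi}{H^{1/2}_{00}(\Gamma)}$ with constant independent of $\TT_1$ since only quotient-norm inequalities and the mesh-independent elliptic estimate enter.

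\textbf{Right-inverse property.} This is immediate from the definitions: $\trace\,\ext(\varphi)=\wat u|_\Gamma=u_1|_\Gamma=\varphi$ by construction~\eqref{ext}.

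\textbf{Kernel characterization.} For one inclusion, take $\varphi\in H^{1/2}_{00}(\Gamma)$ and $(u_1,\ssigma,\wat u,\wat\sigma)=\ext(\varphi)$. Inserting into $b(\cdot,\vv)$ from~\eqref{c2} and reversing the element-wise integration by parts that produced~\eqref{form1}: since $\ssigma=\talpha\nabla u_1-\bbeta u_1$ the second term combines with part of the first, the single-valuedness of $\wat u=u_1|_\cS$ and $\wat\sigma=\ssigma\cdot\nn_\el$ makes the skeleton terms telescope into boundary terms on $\partial\Omega_1$, and $Au_1=0$ in $\Omega_1$ kills the remaining volume term; the boundary contribution on $\partial\Omega_1$ vanishes because $\vv=(v,\ttau)$ is unconstrained but the surviving pairing is $-\dual{\wat u}{\ttau\cdot\nn}_{\cS}-\dual{\wat\sigma}{v}_{\cS}$ evaluated against the single-valued traces, which after summation equals $-\dual{u_1}{\ttau\cdot\nn_{\Omega_1}}_{\partial\Omega_1}-\dual{\ssigma\cdot\nn_{\Omega_1}}{v}_{\partial\Omega_1}$ plus the volume identity from $Au_1=0$; a short computation shows these cancel, so $b(\ext(\varphi),\vv)=0$ for all $\vv\in V$, i.e.\ $B\ext(\varphi)=0$. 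For the reverse inclusion, let $\uu=(u_1,\ssigma,\wat u,\wat\sigma)\in\ker B\subset U_1$ and set $\varphi:=\wat u|_\Gamma\in H^{1/2}_{00}(\Gamma)$ (note $\wat u\in H^{1/2}_D(\cS)$ so it is a $\TT_1$-piecewise trace of some $H^1_D(\Omega_1)$-function, and its restriction to $\Gamma$ lies in $H^1_0(\Omega)|_\Gamma=H^{1/2}_{00}(\Gamma)$). Testing $B\uu=0$ first against $\ttau\in\HH(\mathrm{div},\TT_1)$ with $v=0$ and against $v\in H^1(\TT_1)$ with $\ttau=0$ — and further restricting to test functions supported in single elements or with vanishing traces — one recovers distributionally that $\ssigma=\talpha\nabla u_1-\bbeta u_1$ in each $\el$, that $\wat u=u_1|_\cS$ and $\wat\sigma=\ssigma\cdot\nn_\el$ (so $u_1\in H^1(\Omega_1)$ globally, $\ssigma\in\HH(\mathrm{div},\Omega_1)$ globally, by the characterization of $H^{1/2}(\cS)$, $H^{-1/2}(\cS)$), that $Au_1=0$ in $\Omega_1$, and that $u_1\in H^1_D(\Omega_1)$. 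Hence $\uu=\ext(\varphi)$ by uniqueness of the $A$-harmonic extension, which proves $\ker B\subseteq\ext H^{1/2}_{00}(\Gamma)$.

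\textbf{Main obstacle.} The routine but delicate point is the reverse inclusion: extracting from the single scalar identity $b(\uu,\vv)=0$ for all $\vv\in V$ the full set of strong/interface relations. The argument is the standard "localization" used in DPG analyses — choosing test functions supported on one element forces the element-wise PDE and the identity $\ssigma=\talpha\nabla u_1-\bbeta u_1$; then choosing test functions whose skeleton traces are unconstrained (using that $V$ carries no continuity constraint) forces $\wat u$, $\wat\sigma$ to be the two-sided traces of $u_1$, $\ssigma$, which in turn upgrades $u_1\in H^1(\TT_1)$ to $u_1\in H^1(\Omega_1)$ and $\ssigma\in\HH(\mathrm{div},\TT_1)$ to $\ssigma\in\HH(\mathrm{div},\Omega_1)$ via the very definitions of $H^{1/2}(\cS)$ and $H^{-1/2}(\cS)$; finally the boundary condition $u_1=\wat u|_{\partial\Omega_1}$ with $\wat u\in H^{1/2}_D(\cS)$ gives $u_1\in H^1_D(\Omega_1)$. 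None of these steps is hard individually, but care is needed to make sure each family of test functions is admissible in $V$ and that the duality pairings on $\cS$ are handled correctly; we relegate the verification to the same techniques as in \cite{FuentesKDLT_CVF} and \cite[Lemma~4]{FuehrerHK_CDB}.
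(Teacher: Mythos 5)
Your proof is essentially correct, but it is worth noting that the paper does not actually carry out this argument: its proof of Lemma~\ref{la_ext_ker} is a one-line reference to the Laplacian case in \cite[Lemmas 11 and 12]{FuehrerHK_CDB}, asserting that the statements carry over "analogously". What you have written is the analogous argument worked out for the general operator $A$: boundedness of $\ext$ from the well-posedness of the Dirichlet problem in $\Omega_1$ (where, as you correctly observe, the boundary term in $c_1$ disappears for test functions in $H^1_0(\Omega_1)$, so no Assumption~1 is needed there), the right-inverse property by construction, $\ext H^{1/2}_{00}(\Gamma)\subseteq\ker B$ by reversing the element-wise integration by parts with $Au_1=0$, and the reverse inclusion by the standard DPG localization of broken test functions. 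This buys a self-contained verification that the advection and reaction terms cause no new difficulty, which the paper leaves implicit; the paper's citation buys brevity at the cost of hiding exactly the points you spell out.

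Three small repairs. First, the sign: from $Au_1=\div(-\ssigma)+\gamma u_1=0$ one gets $\div\ssigma=\gamma u_1$, not $-\gamma u_1$ (harmless for the norm bound, but fix it). Second, the paper's norms \eqref{Hpm} on $H^{1/2}(\cS)$ and $H^{-1/2}(\cS)$ are infima over extensions $w\in H^1(\Omega)$ and $\qq\in\HH(\mathrm{div},\Omega)$ over the \emph{whole} domain $\Omega$, so your claimed inequalities $\norm{\wat u}{H^{1/2}_D(\cS)}\le\norm{u_1}{H^1(\Omega_1)}$ and $\norm{\wat\sigma}{H^{-1/2}(\cS)}\le\norm{\ssigma}{\HH(\mathrm{div},\Omega_1)}$ require bounded extension operators $H^1_D(\Omega_1)\to H^1(\Omega)$ and $\HH(\mathrm{div},\Omega_1)\to\HH(\mathrm{div},\Omega)$; these exist and their constants are independent of $\TT_1$, so the conclusion $\lesssim$ stands, but the "$\le$" as written is not literal (in particular, extension by zero of $\ssigma$ is not admissible in $\HH(\mathrm{div},\Omega)$ unless its normal trace vanishes on $\Gamma$). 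Third, in the forward inclusion the skeleton terms do not "telescope into boundary terms on $\partial\Omega_1$", since $\vv=(v,\ttau)$ is broken; the correct mechanism, which your computation in fact uses, is element-wise cancellation of each $\dual{\wat u}{\ttau\cdot\nn_\el}_{\partial\el}$ and $\dual{\wat\sigma}{v}_{\partial\el}$ against the boundary terms produced by integration by parts on $\el$, together with $\ssigma=\talpha\nabla u_1-\bbeta u_1$ and $\div\ssigma=\gamma u_1$ in each element. With these adjustments your argument is a valid substitute for the paper's citation.
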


\begin{proof}
These statements can be proved analogously to the case of the Laplacian,
cf.~\cite[Lemmas 11 and 12]{FuehrerHK_CDB}.
\end{proof}

Of course, we also need continuity of the bilinear forms $b(\cdot,\cdot)$, $c_2(\cdot,\cdot)$ and
$d(\cdot,\cdot)$. This is straightforward to show and has already been used in
the initial part of the proof of Theorem~\ref{thm_VF}. We only give the statement.

\begin{lemma} \label{la_cont}
The bilinear forms $b:\;U_1\times V\to\R$, $c_2:\;H^1(\Omega_2)\times H^1(\Omega_2)\to\R$,
and $d:\; U\times U\to\R$ are all uniformly (in $\TT_1$) bounded.
\end{lemma}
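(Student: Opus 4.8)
The plan is to handle the three bilinear forms separately; the only point that is not entirely routine is that all constants must be independent of the mesh $\TT_1$, and this is ensured throughout by invoking the definitions \eqref{Hpm} of the skeleton trace norms as infima over \emph{global} $H^1(\Omega)$- resp.\ $\HH(\mathrm{div},\Omega)$-functions, together with the mesh-independent boundedness of $\trace$ from Lemma~\ref{la_trace}. As a preliminary I would record that $\talpha^{-T}$ is uniformly bounded: from $\xi\cdot\talpha\xi=\xi\cdot\talpha_{\mathrm{sym}}\xi\ge\alpha_0\abs{\xi}^2$ it follows that $\talpha$ is invertible with $\abs{\talpha^{-1}}\le\alpha_0^{-1}$, hence the same for $\talpha^{-T}$; together with the assumed uniform boundedness of $\talpha$, $\bbeta$, $\gamma$, this controls all coefficient factors occurring below. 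For $c_2$, which does not involve $\TT_1$ at all, boundedness on $H^1(\Omega_2)\times H^1(\Omega_2)$ is immediate from the Cauchy--Schwarz inequality and the coefficient bounds.

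For $b$ I would bound the two volume terms directly by Cauchy--Schwarz, estimating $\norm{\pwdiv\ttau+\bbeta\talpha^{-T}\ttau+\gamma v}{L_2(\Omega_1)}$ and $\norm{\pwnabla v+\talpha^{-T}\ttau}{L_2(\Omega_1)}$ by $\norm{v}{H^1(\TT_1)}+\norm{\ttau}{\HH(\mathrm{div},\TT_1)}$. For the two skeleton terms I would use the standard rewriting: picking $w\in H^1(\Omega)$ with $w|_{\partial\el}=\wat u|_{\partial\el}$ for all $\el\in\TT_1$, element-wise integration by parts gives $\dual{\wat u}{\ttau\cdot\nn}_\cS=\vdual{w}{\pwdiv\ttau}_{\Omega_1}+\vdual{\nabla w}{\ttau}_{\Omega_1}\le\norm{w}{H^1(\Omega)}\,\norm{\ttau}{\HH(\mathrm{div},\TT_1)}$, and taking the infimum over such $w$ turns this into $\norm{\wat u}{H^{1/2}(\cS)}\,\norm{\ttau}{\HH(\mathrm{div},\TT_1)}$. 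Symmetrically, choosing $\qq\in\HH(\mathrm{div},\Omega)$ representing $\wat\sigma$, one has $\dual{\wat\sigma}{v}_\cS=\vdual{\div\qq}{v}_{\Omega_1}+\vdual{\qq}{\pwnabla v}_{\Omega_1}\le\norm{\qq}{\HH(\mathrm{div},\Omega)}\,\norm{v}{H^1(\TT_1)}$, and the infimum gives $\norm{\wat\sigma}{H^{-1/2}(\cS)}\,\norm{v}{H^1(\TT_1)}$. No mesh constant enters.

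For $d$ I would first recall that $\wat u|_\Gamma=\trace\uu$ and $u_2|_\Gamma$ lie in $H^{1/2}_{00}(\Gamma)$ with norms $\lesssim\norm{\uu}{U_1}$ (Lemma~\ref{la_trace}) resp.\ $\lesssim\norm{u_2}{H^1(\Omega_2)}$ (trace theorem on $\Omega_2$). Each of the three terms in \eqref{d} is a duality pairing on $\Gamma$. For $\dual{\wat\sigma}{w_2}_\Gamma$ and $\dual{\wat\chi}{\wat u-u_2}_\Gamma$ I extend the $\Gamma$-datum ($w_2|_\Gamma$, resp.\ $(\wat u-u_2)|_\Gamma$) to an $H^1_D(\Omega_1)$-function whose $H^1(\Omega_1)$-norm is controlled by the $H^{1/2}_{00}(\Gamma)$-norm of the datum, use the identity $\dual{\cdot}{\cdot}_\cS=\dual{\cdot}{\cdot}_\Gamma$ valid for such functions, and invoke the skeleton estimate above; this yields bounds by $\norm{\wat\sigma}{H^{-1/2}(\cS)}\,\norm{w_2}{H^1(\Omega_2)}$ and $\norm{\wat\chi}{H^{-1/2}(\cS)}\bigl(\norm{\uu}{U_1}+\norm{u_2}{H^1(\Omega_2)}\bigr)$, respectively. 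For the last term, $\bbeta\cdot\nn_{\Omega_1}\in L_\infty(\Gamma)$, so the pairing is dominated by $\norm{\wat u-u_2}{L_2(\Gamma)}\,\norm{\wat w+w_2}{L_2(\Gamma)}$, and $L_2(\Gamma)$-norms of these traces are controlled by their $H^{1/2}_{00}(\Gamma)$-norms and hence by $\norm{\cdot}{U_1}+\norm{\cdot}{H^1(\Omega_2)}$. Summing the three contributions gives the uniform boundedness of $d$ on $U\times U$.

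The only genuine obstacle is the mesh-independence of the constants in the skeleton and interface pairings; it is dealt with uniformly by the integration-by-parts rewriting against a global $H^1(\Omega)$- or $\HH(\mathrm{div},\Omega)$-representative of the broken trace, combined with Lemma~\ref{la_trace}. Everything else is a routine application of Cauchy--Schwarz, the coefficient bounds, and standard trace/extension theorems on the fixed domains $\Omega_1$ and $\Omega_2$.
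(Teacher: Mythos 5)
Your proof is correct, and it is essentially the argument the authors have in mind: the paper itself does not prove Lemma~\ref{la_cont} but merely declares it straightforward, so your write-up simply supplies the standard details --- Cauchy--Schwarz with the coefficient bounds for the volume terms and $c_2$, element-wise integration by parts against a global $H^1(\Omega)$- resp.\ $\HH(\mathrm{div},\Omega)$-representative together with the infimum in \eqref{Hpm} for the skeleton pairings (which is exactly how mesh-independence is obtained), and Lemma~\ref{la_trace} plus interface dualities for $d$. The only ingredient that goes slightly beyond bookkeeping is your claim that $w_2|_\Gamma\in H^{1/2}_{00}(\Gamma)$ with $\norm{w_2|_\Gamma}{H^{1/2}_{00}(\Gamma)}\lesssim\norm{w_2}{H^1(\Omega_2)}$ for $w_2\in H^1_D(\Omega_2)$, together with the bounded extension of such interface data into $H^1_D(\Omega_1)$ needed to apply the identity $\dual{\cdot}{\cdot}_\cS=\dual{\cdot}{\cdot}_\Gamma$; this is a standard property of the Lions--Magenes space on a Lipschitz dissection (the extension into $H^1_D(\Omega_1)$ is immediate from the definition $H^{1/2}_{00}(\Gamma)=H^1_0(\Omega)|_\Gamma$, the trace statement for $H^1_D(\Omega_2)$ uses its intrinsic characterization), and the paper relies on the same facts implicitly, e.g.\ when rewriting $\dual{\wat\sigma}{v}_\cS$ as $\dual{\wat\sigma}{v}_\Gamma$ and in the coercivity proof, so invoking them is appropriate.
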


\subsection{Proof of coercivity, statement \eqref{ell} in Theorem~\ref{thm_VF}}\label{sec_pf_ell}

We are now ready to prove the $U$-coercivity of the bilinear form $a(\cdot,\cdot)$,
cf.~\eqref{VF}. We adapt the procedure from \cite{FuehrerHK_CDB} to our situation.

Let $(\uu,u_2)=(u_1,\ssigma,\wat u,\wat\sigma,u_2)\in U$ be given.
We start with the simple estimate
\begin{align} \label{pf0}
   \|(\uu,u_2)\|_U
   &\le
   \|\uu\|_{U_1} + \|u_2\|_{H^1(\Omega_2)}
   \le
   \|\uu-\ext\trace(\uu)\|_{U_1} + \|\ext\trace(\uu)\|_{U_1} + \|u_2\|_{H^1(\Omega_2)}.
\end{align}
By Lemma~\ref{la_ext_ker}, the $\wat u$-component of $\uu-\ext\trace(\uu)$ has zero trace on
$\partial\Omega_1$, i.e.,
\(
   \uu - \ext\trace(\uu)\in U_{1,0},
\)
cf.~\eqref{U10}. Combining Lemmas~\ref{la_B} and \ref{la_ext_ker} this gives
\begin{align} \label{pf1}
   \|\uu-\ext\trace(\uu)\|_{U_1}
   \le
   \|\uu-\ext\trace(\uu)\|_{U_{1,0}}
   \lesssim
   \|B\uu\|_{V'}
   =
   b(\uu,\Theta\uu)^{1/2}.
\end{align}
The last identity is due to the well-known relations of the trial-to-test operator $\Theta$,
\[
   \|B\uu\|_{V'} = \sup_{\vv\in V} \frac {b(\uu,\vv)}{\|\vv\|_V}
                 = \frac {b(\uu,\Theta\uu)}{\|\Theta\uu\|_V}, \quad
   \|\Theta\uu\|_V = \|\cR^{-1}B\uu\|_V = \|B\uu\|_{V'}.
\]
According to Lemma~\ref{la_ext_ker}, operator $\ext$ is bounded,
\begin{align} \label{pf2}
   \|\ext\trace(\uu)\|_{U_1}
   \lesssim
   \|\wat u\|_{H^{1/2}_{00}(\Gamma)}.
\end{align}
A combination of \eqref{pf0}, \eqref{pf1}, and \eqref{pf2} then gives
\begin{align} \label{pf3}
   \|(\uu,u_2)\|_U^2
   \lesssim
   b(\uu,\Theta\uu) + \|\wat u\|_{H^{1/2}_{00}(\Gamma)}^2 + \|u_2\|_{H^1(\Omega_2)}^2.
\end{align}
We continue by considering $\uu^e:=(u_1^e,\ssigma^e,\wat u^e,\wat\sigma^e):=\ext\trace(\uu)=\ext\wat u|_\Gamma$.
In particular, there holds $\|\wat u\|_{H^{1/2}_{00}(\Gamma)}\lesssim \|u_1^e\|_{H^1(\Omega_1)}$.
Noting that, cf.~\eqref{d},
\begin{align} \label{pf3b}
   d(\uu^e,u_2;\uu^e,u_2)
   &=
   \dual{\wat\sigma^e}{u_2}_{\Gamma}
   +
   \dual{\wat\sigma^e}{\wat u - u_2}_\Gamma
   +
   \frac 12\dual{\bbeta\cdot\nn_{\Omega_1} (\wat u-u_2)}{\wat u + u_2}_\Gamma
   \nonumber\\
   &=
   \dual{\wat\sigma^e}{\wat u}_\Gamma
   +
   \frac 12\dual{\bbeta\cdot\nn_{\Omega_1} \wat u}{\wat u}_\Gamma
   -
   \frac 12\dual{\bbeta\cdot\nn_{\Omega_1} u_2}{u_2}_\Gamma,
\end{align}
an application of Lemma~\ref{la_ell_c} gives
\begin{align*}
   \|\wat u\|_{H^{1/2}_{00}(\Gamma)}^2 + \|u_2\|_{H^1(\Omega_2)}^2
   &\lesssim
   c_1(u_1^e,u_1^e) + c_2(u_2,u_2)
   + \frac 12 \dual{\bbeta\cdot\nn_{\Omega_1} u_1^e}{u_1^e}_{\Gamma}
   + \frac 12 \dual{\bbeta\cdot\nn_{\Omega_2} u_2}{u_2}_{\Gamma}
   \nonumber\\
   &=
   c_1(u_1^e,u_1^e) + c_2(u_2,u_2) + d(\uu^e,u_2;\uu^e,u_2)
   - \dual{\wat\sigma^e}{\wat u}_\Gamma.
\end{align*}
Relation \eqref{pf3b} can also be written like
\begin{align*}
   d(\uu^e,u_2;\uu^e,u_2)
   &=
   d(\uu,u_2;\uu,u_2) + \dual{\wat\sigma^e-\wat\sigma}{\wat u}_\Gamma,
\end{align*}
so that the previous bound becomes
\begin{align*}
   \|\wat u\|_{H^{1/2}_{00}(\Gamma)}^2 + \|u_2\|_{H^1(\Omega_2)}^2
   &\lesssim
   c_1(u_1^e,u_1^e) + c_2(u_2,u_2) + d(\uu,u_2;\uu,u_2)
   - \dual{\wat\sigma}{\wat u}_\Gamma.
\end{align*}
Now, recalling the definitions of $c_1(\cdot,\cdot)$ (see \eqref{c1})
and the extension operator $\ext$ (see \eqref{ext}), integration by parts yields the expected relation
$c_1(u_1^e,u_1^e)=\dual{\wat\sigma^e}{\wat u}_\Gamma$. Therefore, continuing the estimate,
\begin{align} \label{pf4}
   \|\wat u\|_{H^{1/2}_{00}(\Gamma)}^2 + \|u_2\|_{H^1(\Omega_2)}^2
   &\lesssim
   c_2(u_2,u_2) + d(\uu,u_2;\uu,u_2) + \dual{\wat\sigma^e-\wat\sigma}{\wat u}_\Gamma.
\end{align}
The last term in \eqref{pf4} can be bounded by duality, the continuity of
$H^{-1/2}(\cS)\ni (\wat\sigma^e-\wat\sigma)\mapsto (\wat\sigma^e-\wat\sigma)|_\Gamma\in H^{-1/2}(\Gamma)$,
and relation \eqref{pf1}. This gives
\begin{align*}
   \dual{\wat\sigma^e-\wat\sigma}{\wat u}_\Gamma
   &\lesssim
   \|\uu-\ext\trace(\uu)\|_{U_1} \|\wat u\|_{H^{1/2}_{00}(\Gamma)}
   \lesssim
   b(\uu,\Theta\uu)^{1/2} \|(\uu,u_2)\|_U.
\end{align*}
Combining this bound with \eqref{pf3} and \eqref{pf4},
and applying Young's inequality, we find that
\begin{align*}
   \|(\uu,u_2)\|_U^2 
   &\lesssim
   \kappa b(\uu,\Theta\uu) + c_2(u_2,u_2) + d(\uu,u_2;\uu,u_2)\\
   &= b(\uu,\Theta_\kappa\uu) + c_2(u_2,u_2) + d(\uu,u_2;\uu,u_2)
\end{align*}
for a sufficiently large constant $\kappa>0$.
This proves the stated coercivity of $a(\cdot,\cdot)$.

\section{Numerical experiments}\label{sec_num}
In this section we report on two numerical experiments. In both of them we choose $d=2$ and,
starting from a manufactured solution, we compute the right-hand side function $f$. The solution of the
second experiment does not satisfy the homogeneous Dirichlet boundary condition. In this case,
we use a standard approach and extend the inhomogeneous Dirichlet datum into the domain
and then shift the resulting terms to the right-hand side. As discrete trial space we use
\begin{align*}
  U_\hp := P^0(\TT_1) \times \left[ P^0(\TT_1) \right]^2 \times S^1_D(\cS)\times P^0(\cS) \times S^1_D(\TT_2),
\end{align*}
where $\TT_1$ and $\cS$ are a mesh and its skeleton in $\Omega_1$ and $\TT_2$ is a mesh in $\Omega_2$.
Throughout, we use meshes $\TT_1$ and $\TT_2$ which are compatible on the interface
$\Gamma$ (although this is not necessary in our analysis).
In the definition of $U_\hp$, $P^k(\TT_1)$ denotes the space of $\TT_1$-piecewise polynomials of degree $k$,
$P^0(\cS)$ denotes the space of piecewise constant functions on $\cS$, and $S^1_D(\cS)\subset H^{1/2}_D(\cS)$
denotes the space of piecewise affine and continuous functions on $\cS$ which vanish on $\Gamma_1$.
The space $S^1_D(\TT_2)\subset H^1_D(\Omega_2)$ is the space of piecewise affine, globally continuous
functions on $\TT_2$ which vanish on $\Gamma_2$.
The trial-to-test operator $\Theta_\kappa = \kappa \cR^{-1}B$ with $\cR:V\rightarrow V'$ being the Riesz operator
is approximated using the discrete Riesz operator $\cR_\hp:V_\hp\rightarrow V_\hp'$ with a finite dimensional
space $V_\hp\subset V$, which we choose to be
\begin{align*}
  V_\hp := P^2(\TT_1)\times \left[ P^2(\TT_1) \right]^2.
\end{align*}
The resulting method is called \textit{practical DPG method}, and was analyzed in \cite{GopalakrishnanQ_14_APD}.
In the latter work, it was shown that the additional discretization error of using $V_\hp$ instead of $V$
does not degrade the convergence order.
Throughout, we use $\kappa=1$ and do not encounter difficulties with this choice.
Note that if $\left( u_1,\ssigma, \wat u,\wat\sigma,u_2 \right)$ denotes the exact solution of~\eqref{VF}
and $\left( u_{1,\hp},\ssigma_\hp,\wat u_\hp, \wat\sigma_\hp, u_{2,\hp} \right)\in U_\hp$ denotes the discrete
solution~\eqref{DPG}, then
by definition of the norm $H^{1/2}(\cS)$ it holds
\begin{align*}
  \| \wat u - \wat u_\hp \|_{H^{1/2}(\cS)} \leq \| u - I_{\TT_1} \wat u_\hp \|_{H^1(\Omega_1)}=:\textrm{err}(\wat u),
\end{align*}
where $I_{\TT_1} \wat u_\hp\in S^1_D(\TT_1)$ is the nodal interpolant of $\wat u_\hp$
with $(I_{\TT_1} \wat u_\hp)|_\cS=\wat u_\hp$.
Likewise,
\begin{align*}
  \| \wat\sigma - \wat\sigma_\hp \|_{H^{-1/2}(\cS)} \leq
  \| \ssigma - \mathbf{I}_{\TT_1} \wat\sigma_\hp \|_{\HH(\mathrm{div},\Omega_1)}=:\textrm{err}(\wat\sigma),
\end{align*}
where $\mathbf{I}_{\TT_1}\wat\sigma_\hp\in\cR\TT_0(\TT_1)$ is the lowest-order Raviart-Thomas interpolant
of $\wat\sigma_\hp$, i.e.,
$(\nn_\el\cdot\mathbf{I}_{\TT_1}\wat\sigma_\hp)|_{\partial\el}=\wat\sigma_\hp|_{\partial\el}$
for any $\el\in\TT_1$.
Furthermore, we plot the errors
\begin{align*}
  \textrm{err}(u_1) &:= \| u_1 - u_{1,\hp} \|_{L_2(\Omega_1)},\\
  \textrm{err}(\ssigma) &:= \| \ssigma - \ssigma_\hp \|_{\LL_2(\Omega_1)},\\
  \textrm{err}(u_2) &:= \| u_2 - u_{2,\hp} \|_{H^1(\Omega_2)},
\end{align*}
as well as the so-called \textit{energy error} of the DPG part
\begin{align*}
  \textrm{err}(\uu) := \sup_{\vv\in V} \frac{b(\uu-\uu_\hp,\vv)}{\|\vv\|_{V}} = \| \Theta_\kappa (\uu-\uu_\hp) \|_{V},
\end{align*}
cf.~\eqref{pf1}.
In both experiments, we use a sequence of meshes resulting from uniform mesh refinements.
The quasi-optimality result of Theorem~\ref{thm_DPG} and well-known approximation results then show that
\begin{align*}
  \| \uu-\uu_\hp \|_{U_1} + \| u_2 - u_{2,\hp} \|_{H^1(\Omega_2)} = \OO(h) = \OO(N^{-1/2}).
\end{align*}
Here, $N$ denotes the overall number of degrees of freedom of $U_\hp$.
Hence, $\textrm{err}(\cdot)=\OO(N^{-1/2})$ for all of the errors defined above.

\subsection{Experiment 1}

We choose $\Omega_1 := \left( 0,1 \right)\times\left( 0,1 \right)$,
$\Omega_2:=\left( 1,2 \right)\times\left( 0,1 \right)$ and use the exact solution
\begin{align*}
  u(x,y):=x(2-x)y(1-y).
\end{align*}
The remaining parameters in the equation~\eqref{prob} are chosen as
$\talpha=id$, $\bbeta=\left( xy,1 \right)^{\top}$, and $\gamma=1-\sin(\pi x)$.
In Figure~\ref{fig:1} we plot the errors versus the degrees of freedom on a double logarithmic scale.
As expected, all the errors behave like $\OO(N^{-1/2})$, which is plotted in black without markers.
In Figure~\ref{fig:1a}, we plot the error $\wat u-u_2$ on the coupling boundary $\Gamma$
for the case with mesh width $1/32$.

\begin{figure}[htb]
  \psfrag{errU1}{     $\textrm{err}(u_1)$}
  \psfrag{errSigma1}{     $\textrm{err}(\ssigma)$}
  \psfrag{errU2}{     $\textrm{err}(u_2)$}
  \psfrag{errUhat}{     $\textrm{err}(\wat u)$}
  \psfrag{errSigmaHat}{     $\textrm{err}(\wat\sigma)$}
  \psfrag{errDPG}{     $\textrm{err}(\uu)$}
  \psfrag{total degrees of freedom}{$N$}
  \centering
  \includegraphics[width=0.7\textwidth]{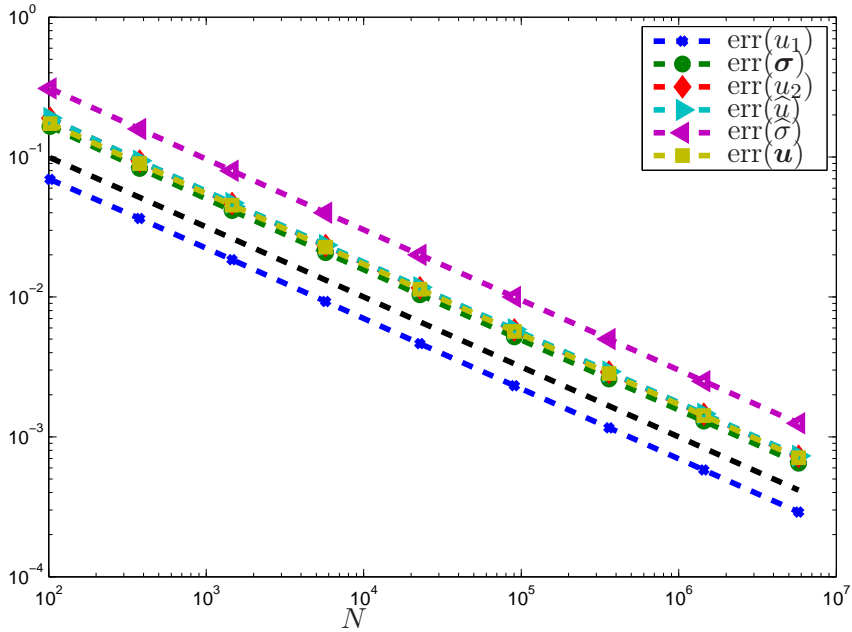}
  \caption{Error plots for Experiment~1. The black line without markers denotes
  $\OO(N^{-1/2})$, and $N$ is the total number of degrees of freedom.}
  \label{fig:1}
\end{figure}

\begin{figure}[htb]
  \centering
  \includegraphics[width=0.6\textwidth]{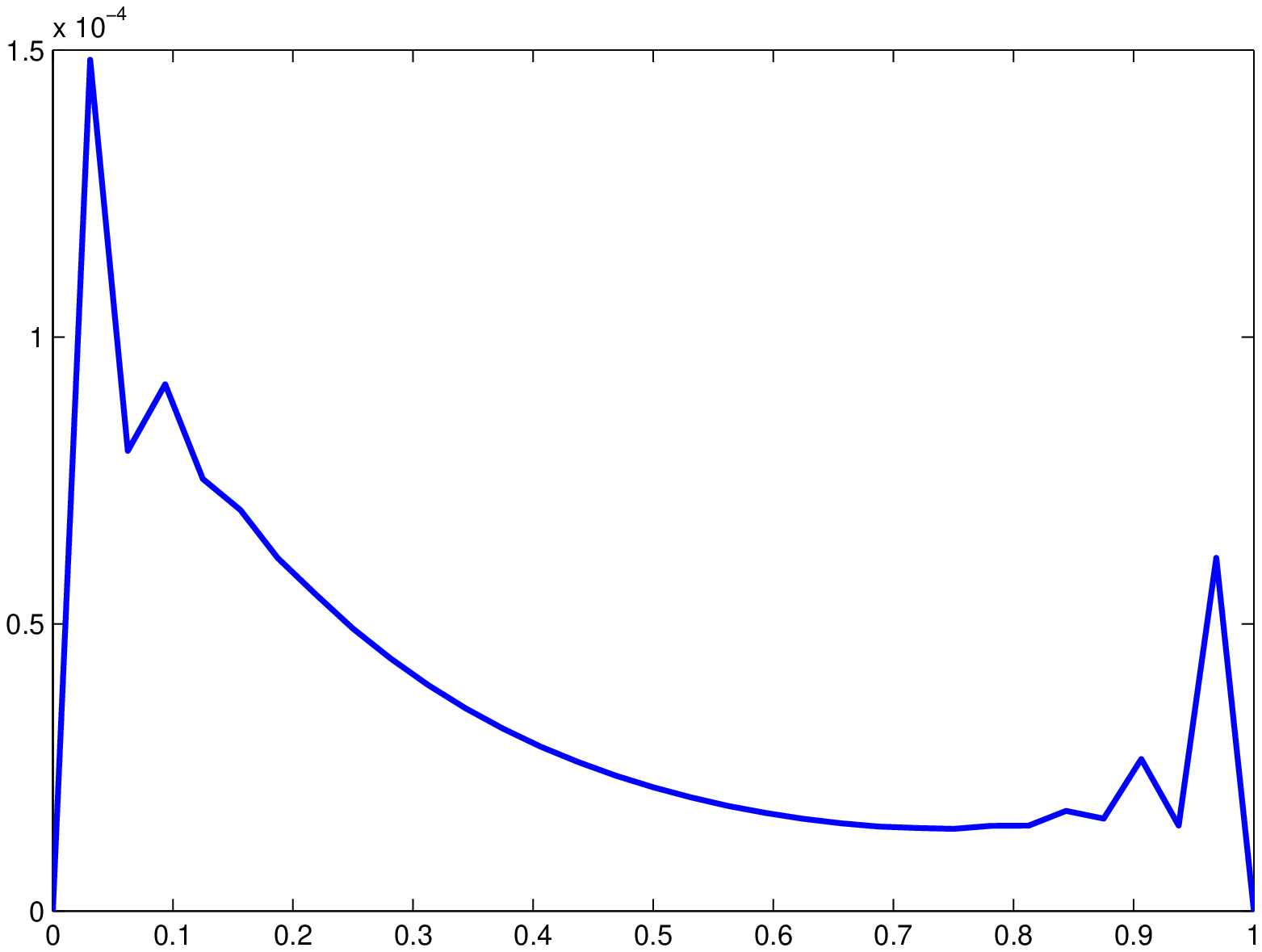}
  \caption{The jump $\wat u-u_2$ on the interface $\Gamma$ in Experiment~1.}
  \label{fig:1a}
\end{figure}

\subsection{Experiment 2}

We choose $\Omega_1 := (0.2, 0.7)\times(0.2,1.2)$ and $\Omega_2:=(0.7,1.2)\times(0.2,1.2)$
and use the exact solution
\begin{align*}
  u(x,y) := \arctan\left( \frac{1-\abs{(x,y)}}{\varepsilon} \right).
\end{align*}
The remaining parameters in the equation~\ref{prob} are chosen as
$\talpha = \varepsilon \cdot id$, $\bbeta = \exp(x)\left( \sin y, \cos y \right)$, $\gamma=0$,
and $\varepsilon=0.05$.
The exact solution $u$ has a curved layer of moderate width inside $\Omega$, see~Figure~\ref{fig_atan}.
In Figure~\ref{fig:2} we plot the errors versus the degrees of freedom on a double logarithmic scale.
Again, as expected, we obtain the convergence order $\OO(N^{-1/2})$.
In Figure~\ref{fig:2a}, we plot the error $\wat u-u_2$ on the coupling boundary $\Gamma$,
again for mesh width $1/32$.
Note that the layer of the exact solution cuts through $\Gamma$, and this is reflected in Fig.~\ref{fig:2a}.

\begin{figure}[htb]
  \centering
  \includegraphics[width=0.7\textwidth]{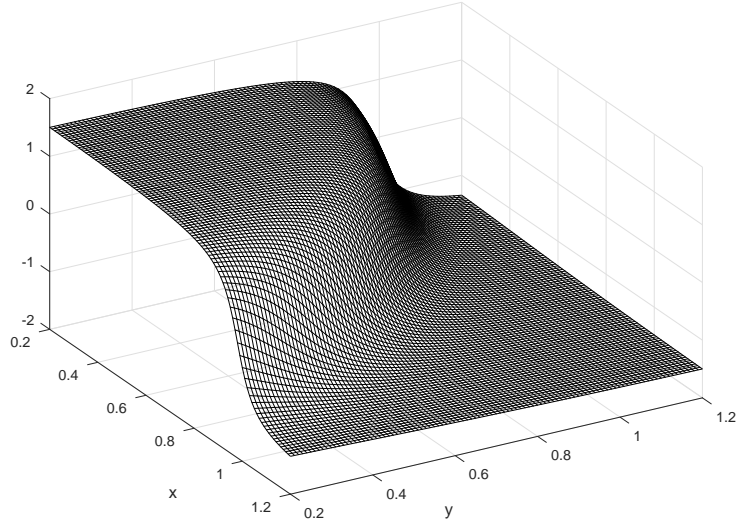}
  \caption{The exact solution from Experiment~2.}
  \label{fig_atan}
\end{figure}

\begin{figure}[htb]
  \psfrag{errU1}{     $\textrm{err}(u_1)$}
  \psfrag{errSigma1}{     $\textrm{err}(\ssigma)$}
  \psfrag{errU2}{     $\textrm{err}(u_2)$}
  \psfrag{errUhat}{     $\textrm{err}(\wat u)$}
  \psfrag{errSigmaHat}{     $\textrm{err}(\wat\sigma)$}
  \psfrag{errDPG}{     $\textrm{err}(\uu)$}
  \psfrag{total degrees of freedom}{$N$}
  \centering
  \includegraphics[width=0.6\textwidth]{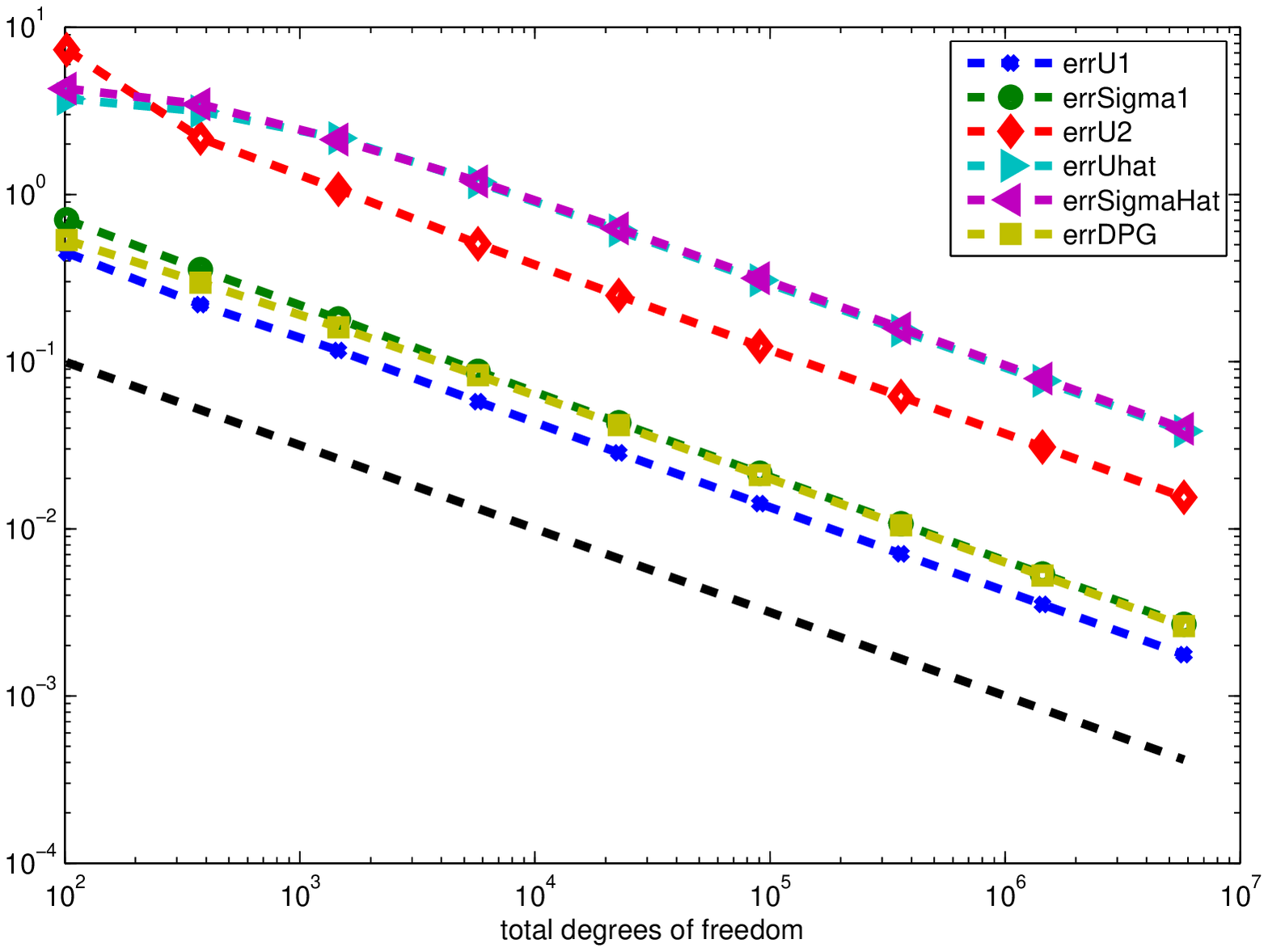}
  \caption{Error plots for Experiment~2. The black line without markers denotes
  $\OO(N^{-1/2})$, and $N$ is the total number of degrees of freedom.}
  \label{fig:2}
\end{figure}

\begin{figure}[htb]
  \centering
  \includegraphics[width=0.6\textwidth]{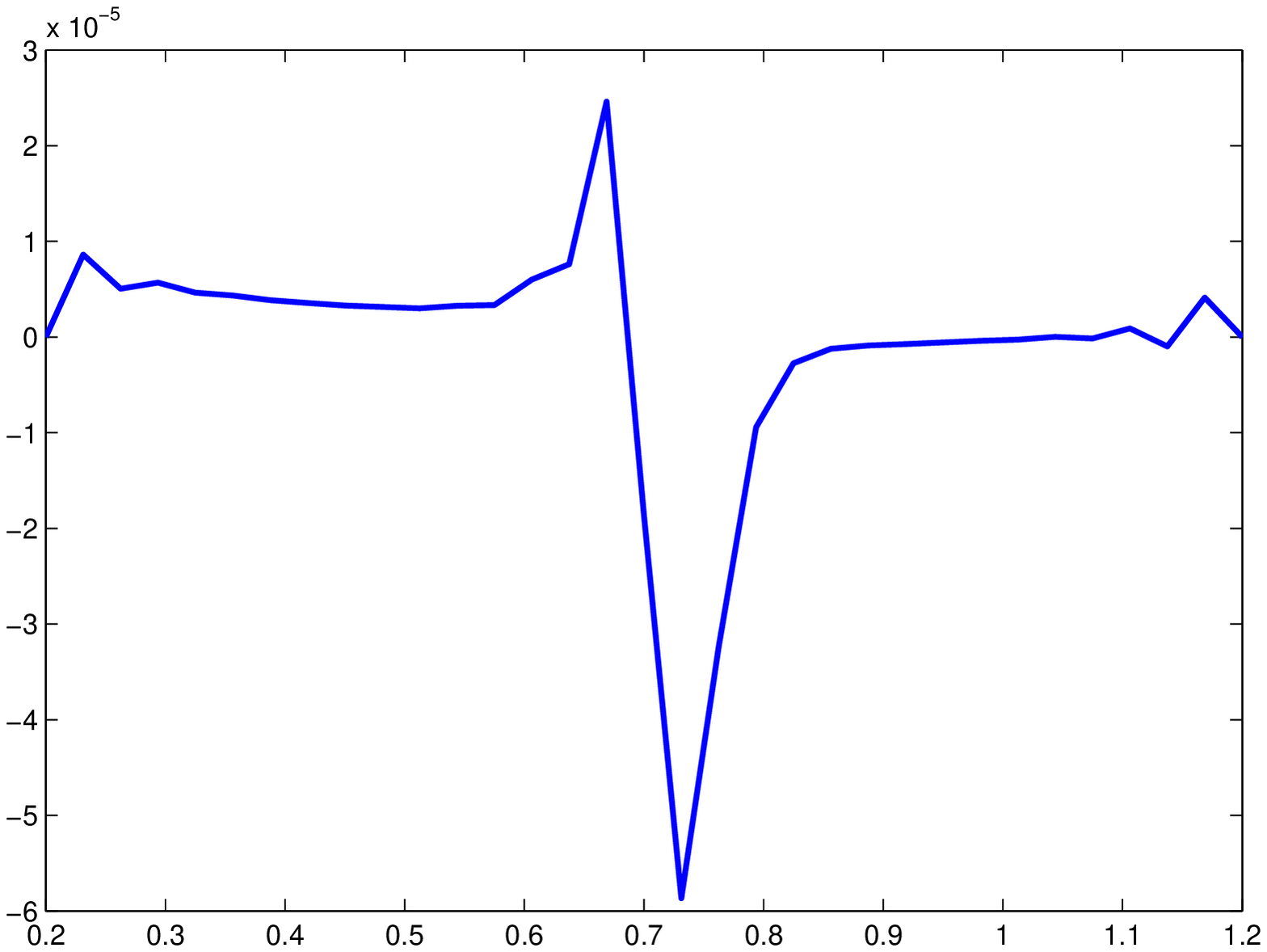}
  \caption{The jump $\wat u-u_2$ on the interface $\Gamma$ in Experiment~2.}
  \label{fig:2a}
\end{figure}

\bibliographystyle{abbrv}
\bibliography{/home/norbert/tex/bib/bib,/home/norbert/tex/bib/heuer}
\end{document}